\let \mr=\mathrm
\begin{document}

\title{Semi-robust equal-order hybridized discontinuous Galerkin discretizations for the Oseen problem}

\titlerunning{Semi-robust equal-order HDG methods for the Oseen problem}        
 
\author{Xiaoqi Ma   \and
Jin Zhang }


\institute{Corresponding author:  Jin Zhang \at
              School of Mathematics and Statistics, Shandong Normal University,
Jinan 250014, China\\
              \email{jinzhangalex@sdnu.edu.cn}           
           \and
           Xiaoqi Ma  \at
           School of Mathematics and Statistics, Shandong Normal University,
Jinan 250014, China\\
            \email{xiaoqiMa@hotmail.com}  
}

\date{Received: date / Accepted: date}

\maketitle

\begin{abstract}
This paper introduces a unified analysis framework of equal-order hybridized discontinuous Galerkin (HDG) discretizations for the Oseen problem. The general framework covers standard HDG, embedded discontinuous Galerkin, and embedded-hybridized discontinuous Galerkin methods. While such equal-order schemes are computationally attractive, they generally violate the discrete inf-sup condition, leading to pressure instabilities. To address this issue, we introduce a symmetric pressure stabilization term. We establish the stability of the proposed methods and, by using carefully chosen projection operators, derive a semi-robust error estimate of order $k$ in an energy norm---where ``semi-robust'' means the error constant is independent of negative powers of the viscosity $\nu > 0$. Numerical experiments are provided to confirm the theoretical convergence rates and to illustrate the effectiveness of these methods.
\keywords{Oseen equation \and equal-order HDG methods \and quasi-uniform meshes \and convergence}
\subclass{76D07\and 65N12 \and 65N30}
\end{abstract}
%
%
%
\section{Introduction}
The numerical simulation of incompressible fluid flows, governed by the Navier--Stokes equations, is a cornerstone of computational fluid dynamics. In many practical scenarios, such as in iterative solution strategies or in time-dependent simulations, the problem is often linearized, leading to the Oseen equation. This model captures the essential challenges of incompressible flow, namely the coupling between velocity and pressure through the incompressibility constraint, and the interplay between viscous diffusion and convection \cite{Bur1Fer2:2006--C, Gar1Joh2:2021-motified}. The accurate and stable discretization of these equations, particularly in the convection-dominated regime where the viscosity is small, remains a topic of intense research.

Discontinuous Galerkin (DG) methods have gained prominence for their excellent stability properties, high-order accuracy, and suitability for complex geometries and adaptive meshing \cite{Coc1Kan2:2002--L, Coc1Kan2:2004--motified, Akb1Lin2:2018--motified}. Among the various DG variants, the hybridizable discontinuous Galerkin (HDG) method introduces a skeletal hybrid variable on element interfaces, enabling a substantial reduction in the globally coupled degrees of freedom through static condensation while preserving DG's advantageous properties \cite{Coc1Gop2:2009--motified, Kir1Rhe2:2019--A, Coc1Gop2Ngu3:2011--A, Rhe1Well2:2017--A, Coc1Say2:2014--D}. To further enhance efficiency, the embedded discontinuous Galerkin (EDG) method was developed, which enforces global continuity on these hybrid unknowns, leading to an even more compact global system \cite{Guz1Coc2Sto3:2007--motified, Lab1Well2:2007--motified, Lab1Well2:2012--E, Han1Hou2:2021--motified}. More recently, the embedded hybridized discontinuous Galerkin (E-HDG) framework \cite{Rhe1Well2:2020--motified} has been proposed to combine the computational efficiency of EDG with the pressure-robustness inherent to HDG. Collectively, these approaches represent a powerful and versatile framework for flow simulations, especially in convection-dominated regimes.

A central challenge for any mixed method in incompressible flow is satisfying the discrete inf-sup condition \cite{Bre1For2:1991-M}, which is essential for suppressing spurious pressure modes and ensuring stability. This requirement typically precludes the use of computationally attractive equal-order approximations (e.g., $P_{k}-P_{k}$ for $k\ge 1$) for velocity and pressure, as they are inherently inf-sup unstable. While inf-sup stable elements (e.g., $P_{k}-P_{k-1}$) are well-established, they complicate implementation and increase memory overhead. Consequently, various stabilization techniques have been developed for equal-order elements within the continuous finite element framework \cite{Hug1Fra2:1986-motified, Fra1Hug2:1993-C, Ahm1Cha2:2017--A, de1Gar2Joh3:2019--E, Gan1Mat2Tob3:2008--L, Bur1Fer2:2006--C, Bur1Fer2:2007--C, Bur1Han2:2006--E, Joh1Kno2:2020-F, Bur1Fer2:2008--G, Bos1Vol2:2021--S, Gan1Min2:2021--A, Coc1Kan2Sch3:2009--motified, Are1Kar2:2022--E}.

However, the analysis of stabilized equal-order formulations within a unified hybridized DG framework for the Oseen equations remains less mature. In 2017, S. Rhebergen et al. conducted a detailed stability and error analysis for equal-order HDG formulations for the Stokes equation \cite{Rhe1Well2:2017--A}. Subsequently, Y. Hou et al. analyzed equal-order HDG, EDG and E-HDG methods for the Stokes equation with a small pressure penalty parameter \cite{Hou1Han2:2021--motified}. These pioneering studies demonstrated the feasibility of stable equal-order HDG-type methods for incompressible Stokes flow. 
While there exist some analyses for the Oseen problem, they are predominantly focused on HDG methods within the local discontinuous Galerkin (LDG) framework \cite{Ces1Coc2--2013A, Gia1Sev2:2020--T, Sol1Var2:2022--motified2, Tu1Zha2:2023--B}. It is true that LDG-HDG methods allow for the local elimination of flux variables \cite{Cas1Seq2:2013--C}; however, interior penalty (IP) HDG methods provide a strictly primal alternative. IP-HDG methods inherently avoid auxiliary unknowns, resulting in a simpler and more compact implementation structure. Despite this, a critical gap persists regarding IP-HDG formulations for the Oseen problem, particularly in convection-dominated regimes. Most existing estimates depend on inverse powers of the viscosity $\nu$, failing to achieve semi-robustness---i.e., error bounds independent of $\nu$. This limitation significantly restricts the theoretical reliability of these methods for high-Reynolds-number flows.

This paper presents a unified analysis of equal-order HDG, EDG, and E-HDG methods for the incompressible Oseen problem. To circumvent the inf-sup condition, we introduce a symmetric pressure stabilization term. We then provide a stability analysis for the proposed schemes. The core of our theoretical effort lies in the derivation of semi-robust error estimates. By using carefully chosen projection operators, an optimal convergence rate of $\mathcal{O}(h^{k})$ can be derived in an energy norm, where the constant is independent of $\nu^{-1}$. This property is critical for ensuring the method's accuracy and reliability in the small-viscosity regime.

The outline of this article is as follows: Section 2 introduces the incompressible Oseen problem. In Section 3, we formulate the equal-order HDG, EDG, and E-HDG methods on quasi-uniform meshes. In Section 4, some interpolation operators and corresponding interpolation estimates are obtained. In Section 5, we establish error estimate with the constant independent of inverse powers of the viscosity. Finally, we present numerical experiments that validate our theoretical findings.

Let $\Omega \subset \mathbb{R}^{d}$ ($d= 2$ or $3$) be a bounded, convex polygonal ($d=2$) or polyhedral ($d=3$) domain. For any bounded domain $D \subseteq \Omega$, we consider the Sobolev spaces $W^{m, q}(D)$, associated with norms $||\cdot||_{m, q, D}$ for $m \ge 0$, and $q \ge 1$. When $m=0$, $W^{0, q}(D) =L^{q}(D)$ and when $q=2$, $ W^{m, 2}(D)= H^{m}(D)$. The norm and semi-norm of $H^{m}(D)$ are denoted by $||\cdot||_{m, D}$ and $|\cdot|_{m, D}$, respectively. For $m=0$ and $q=2$, we write $L^{2}(D):=H^{0}(D)$, whose inner product is denoted by $(\cdot, \cdot)_{D}$, with associated norm $||\cdot||_{D}$. When $D=\Omega$, we omit the subscript $D$ in both the inner product and the norm for simplicity.

\section{The Oseen problem}

The incompressible Oseen problem is formulated as follows:
\begin{equation}\label{eq:SSSS-1}
\left\{
\begin{aligned}
& -\nu\Delta u+(b\cdot\nabla)u+\sigma u+\nabla p=f&&\quad \text{in $\Omega$},\\
& \nabla\cdot u=0&&\quad \text{in $\Omega$},\\
&u=0&&\quad\text{on $\partial\Omega$},
\end{aligned}
\right.
\end{equation}
where $0<\nu<1$ is the kinematic viscosity, $\sigma>0$ is a reaction coefficient, and $b\in [W^{1, \infty}(\Omega)]^{d}$ is a given divergence-free velocity field (i.e., $\nabla\cdot  b=0$). Introduce the function spaces 
$$V := [H_{0}^{1}(\Omega)]^{d},\quad Q := L^{2}_{0}(\Omega)=\{q\in L^{2}(\Omega), \int_{\Omega} q \mr{d}x=0\}.$$ The weak formulation of \eqref{eq:S-1} reads as follows: Given $f\in [L^{2}(\Omega)]^{d}$, find $(u, p)\in V\times Q$ such that, for all $(v, q)\in V\times Q$,
\begin{equation*}\label{eq:S-2}
\left\{
\begin{aligned}
&\nu(\nabla u, \nabla v) + ((b\cdot \nabla)u, v) + (\sigma u, v)- (p, \nabla\cdot v) = F(v),\\
&(q, \nabla\cdot u) = 0
\end{aligned}
\right.
\end{equation*}
with $F(v)=\int_{\Omega}f\cdot v\mr{d}x$. The weak formulation is well posed by Babu$\check{s}$ka-Brezzi theory for all $\nu > 0$; see \cite{Gir1Rav2:2012--F}.

\section{The equal-order HDG methods}
\subsection{Mesh-related notation}
Let $\{\mathcal{T}_{h}\}_{0< h< 1}$ be a family of shape-regular and quasi-uniform triangulations of the domain $\Omega$. For each $\mathcal{T}_{h}$, we define the mesh size as $h = \max\limits_{K\in \mathcal{T}_{h}}h_{K}$, where $h_{K}$ denotes the diameter of element $K$. Let $\mathcal{F}_{h}$ be the set of all facets (edges for $d=2$, faces for $d=3$) in $\mathcal{T}_{h}$, and let $\mathcal{F} := \bigcup\limits_{F \in \mathcal{F}_h} F$ denote the mesh skeleton. We denote the boundary of an element $K$ by $\partial K$, and the outward unit normal vector on $\partial K$ by $n$. We introduce the trace operator $\zeta: H^{l}(\Omega)\rightarrow H^{l-1/2}(\mathcal{F})$ ($l\ge 1$) which restricts functions defined on $\Omega$ to the skeleton $\mathcal{F}$.

\subsection{Fnite element spaces and norms}
Denote the following discontinuous finite element spaces on $\Omega$ by
\begin{equation*}
\begin{aligned}
&V_{h}=\{v_{h}\in [L^{2}(\Omega)]^{d}: v_{h}\in [P_{k}(K)]^{d}, \forall K\in\mathcal{T}_{h}\},\\
&Q_{h}=\{q_{h}\in L^{2}(\Omega): q_{h}\in P_{k}(K), \forall K\in \mathcal{T}_{h}, \int_{\Omega} q_{h}dx=0\},
\end{aligned}
\end{equation*}
where $P_{k}(K)$ is the space of polynomials of degree at most $k$ on the element $K$. On the mesh skeleton $\mathcal{F}$, we define the corresponding discontinuous facet spaces:
\begin{equation*}
\begin{aligned}
&\bar{V}_{h}=\{\bar{v}_{h}\in [L^{2}(\mathcal{F})]^{d}: \bar{v}_{h}\in [P_{k}(F)]^{d},\forall F\in\mathcal{F}_{h}, \quad \bar{v}_{h}=0\quad \text{on $\partial\Omega$}\},\\
&\bar{Q}_{h}=\{\bar{q}_{h}\in L^{2}(\mathcal{F}): \bar{q}_{h}\in P_{k}(F), \forall F\in \mathcal{F}_{h}\},
\end{aligned}
\end{equation*}
where $P_{k}(F)$ denotes the space of polynomials of degree at most $k$ defined on the facet $F$. 

We define the extended function spaces $V_{h}^{*} = V_{h}\times \bar{V}_{h}$, $Q^{*}_{h}= Q_{h} \times \bar{Q}_{h}$ and $X_{h}^{*} = V_{h}^{*}\times Q_{h}^{*}$. Function pairs in $V_{h}^{*}$ and $Q^{*}_{h}$ will be denoted by boldface, e.g., ${\bm{v}}_{h} = (v_{h}, \bar{v}_{h})\in V_{h}^{*}$ and ${\bm{q}}_{h} = (q_{h}, \bar{q}_{h})\in Q^{*}_{h}$. We introduce the following function spaces:
\begin{equation*}
\begin{aligned}
&V(h)=V_{h}+[H_{0}^{1}(\Omega)]^{d}\cap [H^{2}(\Omega)]^{d},&&\quad Q(h)=Q_{h}+L_{0}^{2}(\Omega)\cap H^{1}(\Omega),\\
&\bar{V}(h)=\bar{V}_{h}+[H_{0}^{3/2}(\mathcal{F})]^{d},&&\quad \bar{Q}(h)=\bar{Q}_{h}+H_{0}^{1/2}(\mathcal{F}),
\end{aligned}
\end{equation*}
where $[H_{0}^{3/2}(\mathcal{F})]^{d}$ and $H_{0}^{1/2}(\mathcal{F})$ are the trace spaces of $[H_{0}^{1}(\Omega)]^{d}\cap [H^{2}(\Omega)]^{d}$ and $L_{0}^{2}(\Omega)\cap H^{1}(\Omega)$ on $\mathcal{F}$, respectively. The composite function spaces are defined as $V^{*}(h)=V(h)\times \bar{V}(h)$ and $Q^{*}(h)= Q(h)\times \bar{Q}(h)$.

For all $({\bm{v}}, {\bm{q}})\in V^{*}(h)\times Q^{*}(h)$, we define the natural norm derived from the bilinear form as
\begin{equation}\label{FF-3}
\begin{aligned}
&|||({\bm{v}}, {\bm{q}})|||_{\nu}^{2}=\nu|||{\bm{v}}|||^{2}+\sum_{K\in\mathcal{T}_{h}}\sigma||v||_{K}^{2}+|{\bm{v}}|_{up}^{2}+|{\bm{q}}|_{p}^{2},
\end{aligned}
\end{equation}
where 
\begin{equation*}
\begin{aligned}
&|||{\bm{v}}|||^{2}=\sum_{K\in \mathcal{T}_{h}}||\nabla v||_{K}^{2}+\sum_{K\in\mathcal{T}_{h}}\frac{\eta}{h_{K}}||\bar{v}-v||_{\partial K}^{2},\\
&|{\bm{v}}|_{up}^{2}=\frac{1}{2}\sum_{K\in\mathcal{T}_{h}}||| b\cdot n|^{\frac{1}{2}}(v-\bar{v})||_{\partial K}^{2},\\
&|{\bm{q}}|_{p}^{2}=\sum\limits_{K\in\mathcal{T}_{h}}\frac{\alpha}{\nu} h_{K}||\bar{q}-q||_{\partial K}^{2}.
\end{aligned}
\end{equation*}
The discrete trace inequalities \cite{Han1Hou2:2021--motified} will be used,
\begin{equation}
||v_{h}||_{\partial K} \le Ch_{K}^{-\frac{1}{2}}||v_{h}||_{K}\quad \forall v_{h}\in P_{k}(K), \forall K\in \mathcal{T}_{h}.\label{ZZ-2}
\end{equation}

\subsection{Weak formulation}
Now, the weak formulation of \eqref{eq:S-1} is that: Find $({\bm{u}}_{h}, {\bm{p}}_{h})\in X_{h}^{*}$ such that
\begin{equation}\label{eq:SD}
\left\{
\begin{aligned}
&a_{h}({\bm{u}}_{h}, {\bm{v}}_{h})+b_{h}({\bm{p}}_{h}, v_{h})+(\sigma u_{h}, v_{h})+o_{h}( b; {\bm{u}}_{h}, {\bm{v}}_{h})=(f, v_{h}),\\
&b_{h}({\bm{q}}_{h}, u_{h})-c_{h}({\bm{p}}_{h},{\bm{q}}_{h})=0,
\end{aligned}
\right.
\end{equation}
for all $({\bm{v}}_{h}, {\bm{q}}_{h})\in X_{h}^{*}$, where
\begin{equation*}
\begin{aligned}
&a_{h}({\bm{u}}_{h},{\bm{v}}_{h})=\sum_{K\in\mathcal{T}_{h}}\int_{K}\nu\nabla u_{h}:\nabla v_{h}\mr{d}x+\sum_{K\in\mathcal{T}_{h}}\int_{\partial K}\frac{\eta \nu}{h_{K}}(u_{h}-\bar{u}_{h})\cdot(v_{h}-\bar{v}_{h})\mr{d}s\\
&\quad\quad\quad\quad-\sum_{K\in\mathcal{T}_{h}}\int_{\partial K}[\nu(u_{h}-\bar{u}_{h})\cdot \frac{\partial v_{h}}{\partial n}+\nu\frac{\partial u_{h}}{\partial n}\cdot(v_{h}-\bar{v}_{h})]\mr{d}s,\\
&b_{h}({\bm{p}}_{h}, v_{h})=-\sum_{K\in\mathcal{T}_{h}}\int_{K}p_{h}\nabla\cdot v_{h}\mr{d}x+\sum_{K\in\mathcal{T}_{h}}\int_{\partial K}v_{h}\cdot n\bar{p}_{h}\mr{d}s,\\
&o_{h}( b; {\bm{u}}_{h}, {\bm{v}}_{h})=-\sum_{K\in\mathcal{T}_{h}}\int_{K}(u_{h}\otimes  b):\nabla v_{h}\mr{d}x+\sum_{K\in\mathcal{T}_{h}}\int_{\partial K}\frac{1}{2} b\cdot n(u_{h}+\bar{u}_{h})\cdot (v_{h}-\bar{v}_{h})\mr{d}s\\
&\quad\quad\quad\quad+\sum_{K\in\mathcal{T}_{h}}\int_{\partial K}\frac{1}{2}| b\cdot n|(u_{h}-\bar{u}_{h})\cdot (v_{h}-\bar{v}_{h})\mr{d}s,\\
&c_{h}({\bm{p}}_{h},{\bm{q}}_{h})=\sum_{K\in\mathcal{T}_{h}}\int_{\partial K}\frac{\alpha}{\nu} h_{K}(p_{h}-\bar{p}_{h})\cdot(q_{h}-\bar{q}_{h})\mr{d}s\\
&(f, v_{h})=\sum_{K\in\mathcal{T}_{h}}\int_{K}f\cdot v_{h}\mr{d}x.
\end{aligned}
\end{equation*}
Notice that $\eta>0$ is the velocity penalty parameter, and $\alpha>0$ is the pressure penalty parameter \cite{Rhe1Well2:2017--A}.

\begin{remark}
Following the unified framework established in \cite{Hou1Han2:2021--motified}, the distinction between the equal-order HDG, E-HDG, and EDG methods lies in the continuity constraints imposed on the trace spaces. Specifically, the global approximation space $X_{h}^{*}$ is defined as:
\begin{equation*}
\begin{aligned}
X_{h}^{*}&=V_{h}^{*}\times Q_{h}^{*}\\
&=\left\{
\begin{aligned}
&(V_{h}\times\bar{V}_{h})\times(Q_{h}\times \bar{Q}_{h}),&&\text{for HDG method},\\
&(V_{h}\times(\bar{V}_{h}\cap C^{0}(\mathcal{F})))\times(Q_{h}\times \bar{Q}_{h}),&&\text{for E-HDG method},\\
&(V_{h}\times (\bar{V}_{h}\cap C^{0}(\mathcal{F})))\times(Q_{h}\times (\bar{Q}_{h}\cap C^{0}(\mathcal{F}))),&&\text{for EDG method}.
\end{aligned}
\right.
\end{aligned}
\end{equation*}
\end{remark}

\begin{lemma}(Consistency) Let $(u, p) \in ([H_{0}^{1}(\Omega)]^{d} \cap [H^{2}(\Omega)]^{d}) \times (L^{2}_{0}(\Omega) \cap H^{1}(\Omega))$ and ${\bm{u}} = (u, \zeta(u))$ and ${\bm{p}} = (p, \zeta(p))$, where $(u, p)$ is the solution of \eqref{eq:SSSS-1}, 
\begin{equation*}
\left\{
\begin{aligned}
&a_{h}({\bm{u}}, {\bm{v}}_{h})+ b_{h}({\bm{p}}, v_{h})+ (\sigma u, v_{h})+ o_{h}( b; {\bm{u}}, {\bm{v}}_{h}) = ({\bm{f}}, {\bm{v}}_{h}),&&\quad \forall {\bm{v}}_{h}\in V_{h}^{*},\\
&b_{h}({\bm{q}}_{h},u) -c_{h}({\bm{p}}, {\bm{q}}_{h})= 0,&&\quad \forall {\bm{q}}_{h}\in Q_{h}^{*}.
\end{aligned}
\right.
\end{equation*}
\end{lemma}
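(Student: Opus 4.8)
The plan is to substitute the smooth exact pair $\bm{u}=(u,\zeta(u))$, $\bm{p}=(p,\zeta(p))$ directly into each bilinear form and recover the strong form of \eqref{eq:SSSS-1} tested against $v_h$, so that the momentum equation itself produces the right-hand side. The whole argument rests on one structural fact: since $u\in[H^2(\Omega)]^d\cap[H_0^1(\Omega)]^d$ and $p\in H^1(\Omega)$, their traces on the skeleton are single-valued, whence $u=\zeta(u)=\bar{u}$ and $p=\zeta(p)=\bar{p}$ on every $\partial K$. I would use this first to annihilate all terms that measure the mismatch of velocity or pressure across the facets: the interior-penalty term and the symmetrization term in $a_h$, both carrying a factor $u-\bar{u}$, vanish; the upwind stabilization in $o_h$, carrying $u-\bar{u}$, vanishes; and the pressure stabilization $c_h(\bm{p},\bm{q}_h)$, carrying $p-\bar{p}$, vanishes. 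This reduces each form to its genuine differential part plus a small number of residual facet integrals.

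Next I would integrate by parts element by element on the remaining volume integrals. In $a_h$ this turns $\int_K\nu\nabla u:\nabla v_h$ into $-\int_K\nu\Delta u\cdot v_h+\int_{\partial K}\nu\,\partial_n u\cdot v_h$; after cancelling $\int_{\partial K}\nu\,\partial_n u\cdot v_h$ against the surviving consistency term $-\int_{\partial K}\nu\,\partial_n u\cdot(v_h-\bar{v}_h)$, only the facet integral $\sum_K\int_{\partial K}\nu\,\partial_n u\cdot\bar{v}_h$ remains. In $b_h$, integrating $-\int_K p\,\nabla\cdot v_h$ by parts and using $p=\bar{p}$ leaves exactly $\int_\Omega\nabla p\cdot v_h$. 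For $o_h$ I would rewrite $(u\otimes b):\nabla v_h$ componentwise and integrate by parts, invoking $\nabla\cdot b=0$ to obtain $\int_K((b\cdot\nabla)u)\cdot v_h$ minus a boundary flux $\int_{\partial K}(b\cdot n)\,u\cdot v_h$; combined with the surviving central convective facet term this again leaves only $-\sum_K\int_{\partial K}(b\cdot n)\,u\cdot\bar{v}_h$.

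The decisive step is then to show that these leftover facet integrals vanish. Here I would exploit the continuity of the physical fluxes for the regular solution together with the single-valuedness of the hybrid test function $\bar{v}_h$: on any interior facet shared by $K_1$ and $K_2$ one has $n_{K_1}=-n_{K_2}$, while $\nu\,\partial_n u$ and $(b\cdot n)u$ take opposite-signed but otherwise identical values from the two sides because $\nabla u$ and $u$ are continuous there, so the contributions cancel pairwise; on boundary facets $\bar{v}_h=0$ by definition of $\bar{V}_h$. Summing the surviving volume terms then yields $\int_\Omega(-\nu\Delta u+(b\cdot\nabla)u+\sigma u+\nabla p)\cdot v_h$, which by the momentum equation in \eqref{eq:SSSS-1} equals $(f,v_h)$, giving the first identity. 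For the second identity I would use $\nabla\cdot u=0$ to kill the volume part of $b_h(\bm{q}_h,u)$, the continuity of $u$ (and $u=0$ on $\partial\Omega$) to kill its facet part, and $p=\bar{p}$ to kill $c_h(\bm{p},\bm{q}_h)$, so that $b_h(\bm{q}_h,u)-c_h(\bm{p},\bm{q}_h)=0$. I expect the flux-cancellation bookkeeping on interior facets---keeping normal orientations consistent across shared facets---to be the only genuinely delicate point; everything else is elementwise integration by parts justified by the assumed $H^2\times H^1$ regularity.
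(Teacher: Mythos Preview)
Your argument is correct and complete: the single-valuedness of $u$, $\nabla u$, $p$ on the skeleton kills every jump-type term, elementwise integration by parts produces the strong-form residual tested against $v_h$, and the leftover facet integrals against $\bar{v}_h$ cancel pairwise on interior facets and vanish on $\partial\Omega$ by $\bar{v}_h=0$. The second identity follows exactly as you describe.

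The paper, however, does not give a proof at all: it simply states that the result follows from the framework of \cite[Lemma~4.1]{Rhe1Well2:2017--A}. Your write-up therefore supplies substantially more detail than the paper---you have reproduced precisely the standard HDG consistency calculation that the cited reference contains (there for Stokes, here with the added convection form $o_h$ and the reaction term, both of which you handle correctly). The only extra content beyond the Stokes case is the treatment of $o_h$; your use of $\nabla\cdot b=0$ to reduce $\nabla\cdot(u\otimes b)$ to $(b\cdot\nabla)u$ and the facet-cancellation of $\sum_K\int_{\partial K}(b\cdot n)\,u\cdot\bar{v}_h$ are exactly what is needed.
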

\begin{proof}
This lemma follows directly by applying the theoretical framework developed in \citep[Lemma 4.1]{Rhe1Well2:2017--A}.
\end{proof}
\subsection{Stability of the method}
For the sake of the subsequent stability analysis, we first introduce the following two projections $\Pi_{K}$ and $\Pi_{F}$, where $\Pi_{K} : [H^{1}(\Omega)]^{d}\rightarrow V_{h}$ is $L^{2}$-projection such that
\begin{equation*}
\int_{K}(\Pi_{K}v - v) \cdot y_{h} \mr{d}x = 0\quad \forall y_{h}\in [P_{k}(K)]^{d}
\end{equation*}
for all $K\in \mathcal{T}_{h}$, and $\Pi_{F} : [H^{1}(\Omega)]^{d}\rightarrow \bar{V}_{h}$ is the $L^{2}$-projection,
\begin{equation*} 
\sum_{K\in\mathcal{T}_{h}}\int_{\partial K}(\Pi_{F}v - v)\cdot \bar{y}_{h}\mr{d}s = 0\quad \forall \bar{y}_{h}\in [P_{k}(F)]^{d} 
\end{equation*}
for all $F\in \mathcal{F}_{h}$. The following two estimates will be used,
\begin{align}
&||v - \Pi_{K}v||_{F} \le ch_{K}^{\frac{1}{2}}|v|_{1, K}\quad&& \forall F\in \mathcal{F}_{h}, F\subset \partial K, \forall K \in \mathcal{T}_{h},\label{eq:S-1}\\
&||\Pi_{K}v- \Pi_{F}v||^{2}_{\partial K} \le ch_{K}||v||^{2}_{1, K}\quad&& \forall K\in\mathcal{T}_{h},\label{eq:S-2}
\end{align}
where $c > 0$ is independent of $h_{K}$ and $\nu$. The first inequality is due to \citep[Lemma 1.59]{Di1Ern2:2012--M}, and the second is due to \citep[Proposition 3.9]{Coc1Gop2Ngu3:2011--A}.
\begin{lemma}\label{D-1}
\cite[Lemma 4.4]{Rhe1Well2:2017--A} For every ${\bm{p}}_{h}=(p_{h}, \bar{p}_{h})\in Q_{h}^{*}$, there exists a $w_{h}\in V_{h}$ such that
\begin{equation*}
 b_{h}({\bm{p}}_{h}, -w_{h}) \ge ||p_{h}||^{2} - c\nu^{\frac{1}{2}}\alpha^{-\frac{1}{2}} b_{c}^{-1}|{\bm{p}}_{h}|_{p} ||p_{h}||
 \end{equation*}
for the constant $c > 0$ independent of $\nu$ and $h_{K}$. Here $|{\bm{p}}_{h}|_{p}:=\left(\sum\limits_{K\in\mathcal{T}_{h}}\int_{\partial K}\frac{\alpha}{\nu}h_{K}(\bar{p}_{h}-p_{h})^{2}\mr{d}s\right)^{\frac{1}{2}}$.
\end{lemma}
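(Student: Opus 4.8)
The plan is to construct the velocity $w_h$ from the classical surjectivity of the divergence operator and then track the facet contributions carefully. Since $p_h \in Q_h \subset L^2_0(\Omega)$, the continuous inf-sup condition (equivalently, the existence of a bounded right inverse of the divergence on $\Omega$; see \cite{Gir1Rav2:2012--F}) furnishes a function $w \in [H_0^1(\Omega)]^d$ with $\nabla\cdot w = p_h$ and $\|w\|_{1} \le b_c^{-1}\|p_h\|$, where $b_c$ is the continuous inf-sup constant. I would then take the discrete candidate to be $w_h := \Pi_K w \in V_h$ and evaluate $b_h(\bm{p}_h, -w_h) = \sum_{K}\int_K p_h\,\nabla\cdot w_h\,\mr{d}x - \sum_K\int_{\partial K} w_h\cdot n\,\bar{p}_h\,\mr{d}s$.

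Next I would integrate by parts elementwise on the volume term, $\int_K p_h\,\nabla\cdot w_h\,\mr{d}x = -\int_K \nabla p_h\cdot w_h\,\mr{d}x + \int_{\partial K} p_h\, w_h\cdot n\,\mr{d}s$. Because $\nabla p_h \in [P_{k-1}(K)]^d \subset [P_k(K)]^d$, the defining property of the $L^2$-projection $\Pi_K$ gives $\int_K \nabla p_h\cdot w_h\,\mr{d}x = \int_K \nabla p_h\cdot w\,\mr{d}x$; integrating this back by parts and using $\nabla\cdot w = p_h$ turns it into $\|p_h\|_K^2$ plus a facet remainder. Collecting everything should yield the identity
\[
b_{h}(\bm{p}_{h}, -w_{h}) = \|p_{h}\|^{2} + \sum_{K\in\mathcal{T}_{h}}\int_{\partial K}(p_{h} - \bar{p}_{h})(w_{h} - w)\cdot n\,\mr{d}s.
\]

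The crucial and most delicate step is establishing this identity, which hinges on eliminating the pure $\bar{p}_h$ facet contribution. I would add the vanishing quantity $\sum_K \int_{\partial K}\bar{p}_h\, w\cdot n\,\mr{d}s$: this sum is zero because $w$ is $H^1$-conforming, hence single-valued with opposite outward normals on each shared interior facet, so its contributions cancel against the single-valued $\bar{p}_h$, while $w = 0$ on $\partial\Omega$ kills the boundary facets. This is exactly what recasts the facet terms as the jump $p_h - \bar{p}_h$, linking them to the stabilization seminorm $|\bm{p}_h|_p$. I expect this bookkeeping to be the main obstacle, since it is where the conformity of $w$ and the structure of $b_h$ must be used in tandem.

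Finally I would bound the remainder by Cauchy--Schwarz on each $\partial K$ and then across elements. The interpolation estimate \eqref{eq:S-1} gives $\|(w_h - w)\cdot n\|_{\partial K} \le \|w - \Pi_K w\|_{\partial K} \le c\,h_K^{1/2}|w|_{1,K}$, while splitting $\|p_h - \bar{p}_h\|_{\partial K} = (\nu/(\alpha h_K))^{1/2}\,(\alpha h_K/\nu)^{1/2}\|p_h-\bar{p}_h\|_{\partial K}$ isolates the factor occurring in $|\bm{p}_h|_p$. The powers of $h_K$ cancel, leaving the bound $c\,\nu^{1/2}\alpha^{-1/2}\,|\bm{p}_h|_p\,|w|_{1}$, and using $|w|_{1} \le \|w\|_{1} \le b_c^{-1}\|p_h\|$ produces the claimed estimate. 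These last steps are routine applications of the stated projection and trace inequalities.
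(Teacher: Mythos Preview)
Your proposal is correct and follows essentially the same route as the paper: choose $w\in[H_0^1(\Omega)]^d$ with $\nabla\cdot w=p_h$ via the continuous inf--sup condition, set $w_h=\Pi_K w$, use the orthogonality of $\Pi_K$ against $\nabla p_h$, and bound the facet remainder by Cauchy--Schwarz together with \eqref{eq:S-1}. Your write-up is in fact slightly more explicit than the paper's, since you spell out the cancellation $\sum_K\int_{\partial K}\bar{p}_h\,w\cdot n\,\mr{d}s=0$ coming from the single-valuedness of $w$ and $\bar{p}_h$, which the paper uses tacitly when passing to the $(\bar{p}_h-p_h)(\Pi_K v_{p_h}-v_{p_h})$ form.
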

\begin{proof}
For all $p\in L_{0}^{2}(\Omega)$ there exists a $v_{p}\in [H_{0}^{1}(\Omega)]^{d}$ such that
\begin{equation*}
p=\nabla\cdot v_{p},\quad  b_{c}||v_{p}||_{1}\le ||p||,
\end{equation*}
where $ b_{c} > 0$ is a constant depending only on $\Omega$ (see \citep[Theorem 6.5]{Di1Ern2:2012--M}). For a $p_{h}\in Q_{h}$, there exists a $v_{p_{h}}\in [H_{0}^{1}(\Omega)]^{d}$ such that 
\begin{equation}\label{SD-1}
\nabla\cdot v_{p_{h}}=p_{h},\quad  b_{c}||v_{p_{h}}||_{1}\le ||p_{h}||.
\end{equation}
It then follows that
\begin{equation*}
\begin{aligned}
||p_{h}||^{2}&=\int_{\Omega}(p_{h})^{2}\mr{d}x=\int_{\Omega}p_{h}\nabla\cdot v_{p_{h}}\mr{d}x\\
&=\sum_{K\in\mathcal{T}_{h}}\int_{\partial K}p_{h}v_{p_{h}}\cdot n\mr{d}s-\sum_{K\in\mathcal{T}_{h}}\int_{K}\nabla p_{h}\cdot v_{p_{h}}\mr{d}x.
\end{aligned}
\end{equation*}
According to the projection $\Pi_{K}$, one has $\int_{K}\nabla p_{h}\cdot (\Pi_{K}v_{p_{h}}-v_{p_{h}})\mr{d}x=0$,
\begin{equation}\label{FF-1}
||p_{h}||^{2}=\sum_{K\in\mathcal{T}_{h}}\int_{\partial K}p_{h}v_{p_{h}}\cdot n\mr{d}s-\sum_{K\in\mathcal{T}_{h}}\int_{K}\nabla p_{h}\cdot (\Pi_{K}v_{p_{h}})\mr{d}x.
\end{equation}
Using the definition of $b_{h}$ and take $w_{h}=\Pi_{K}v_{p_{h}}$, we have
\begin{equation}\label{FF-2}
\begin{aligned}
&b_{h}({\bm{p}}_{h}, \Pi_{K}v_{p_{h}})\\
&=-\sum_{K\in\mathcal{T}_{h}}\int_{K}p_{h}\nabla\cdot (\Pi_{K}v_{p_{h}})\mr{d}x+\sum_{K\in\mathcal{T}_{h}}\int_{\partial K}\Pi_{K}v_{p_{h}}\cdot n\bar{p}_{h}\mr{d}s\\
&=\sum_{K\in\mathcal{T}_{h}}\int_{K}\nabla p_{h}\cdot (\Pi_{K}v_{p_{h}})\mr{d}x+\sum_{K\in\mathcal{T}_{h}}\int_{\partial K}(\bar{p}_{h}-p_{h})\Pi_{K}v_{p_{h}}\cdot n\mr{d}s.
\end{aligned}
\end{equation}
Then from \eqref{FF-1} and \eqref{FF-2}, we can establish the relationship between $b_{h}(\cdot, \cdot)$ and $||p_{h}||$,
\begin{equation*}
\begin{aligned}
||p_{h}||^{2}&=-b_{h}({\bm{p}}_{h}, \Pi_{K}v_{p_{h}})+\sum_{K\in\mathcal{T}_{h}}\int_{\partial K}(\bar{p}_{h}-p_{h})\Pi_{K}v_{p_{h}}\cdot n\mr{d}s\\
&\quad-\sum_{K\in\mathcal{T}_{h}}\int_{\partial K}(\bar{p}_{h}-p_{h})v_{p_{h}}\cdot n\mr{d}s\\
&=-b_{h}({\bm{p}}_{h}, \Pi_{K}v_{p_{h}})+\sum_{K\in\mathcal{T}_{h}}\int_{\partial K}(\bar{p}_{h}-p_{h})(\Pi_{K}v_{p_{h}}-v_{p_{h}})\cdot n\mr{d}s.
\end{aligned}
\end{equation*}
It is easy to derive that
\begin{equation*}
-b_{h}({\bm{p}}_{h}, \Pi_{K}v_{p_{h}})=||p_{h}||^{2}-\sum_{K\in\mathcal{T}_{h}}\int_{\partial K}(\bar{p}_{h}-p_{h})(\Pi_{K}v_{p_{h}}-v_{p_{h}})\cdot n\mr{d}s,
\end{equation*}
where according to the approximate property of $\Pi_{K}$ \eqref{eq:S-2}, i.e.,
\begin{equation*}
||\Pi_{K}v_{p_{h}}-v_{p_{h}}||^{2}_{\partial K}\le ch_{K}|v_{p_{h}}|_{1, K}^{2},
\end{equation*}
one has
\begin{equation*}
\begin{aligned}
&|\sum_{K\in\mathcal{T}_{h}}\int_{\partial K}(\bar{p}_{h}-p_{h})(\Pi_{K}v_{p_{h}}-v_{p_{h}})\cdot n\mr{d}s|\\
&\le \left(\sum_{K\in\mathcal{T}_{h}}\int_{\partial K}h_{K}(\bar{p}_{h}-p_{h})^{2}\mr{d}s\right)^{\frac{1}{2}}\left(\sum_{K\in\mathcal{T}_{h}}\int_{\partial K}\frac{1}{h_{K}}(\Pi_{K}v_{p_{h}}-v_{p_{h}})^{2}\mr{d}s\right)^{\frac{1}{2}}\\
&\le c\left(\sum_{K\in\mathcal{T}_{h}}\int_{\partial K}h_{K}(\bar{p}_{h}-p_{h})^{2}\mr{d}s\right)^{\frac{1}{2}}||v_{p_{h}}||\\
&\le c\left(\frac{\nu}{\alpha}\right)^{\frac{1}{2}}\left(\sum_{K\in\mathcal{T}_{h}}\int_{\partial K}\frac{\alpha}{\nu}h_{K}(\bar{p}_{h}-p_{h})^{2}\mr{d}s\right)^{\frac{1}{2}}||v_{p_{h}}||_{1}\\
&\le c b_{c}^{-1}\nu^{\frac{1}{2}}\alpha^{-\frac{1}{2}}|{\bm{p}}_{h}|_{p}||p_{h}||.
\end{aligned}
\end{equation*}
This completes the proof.
\end{proof}

Next we introduce the global form
$$B_{h}(({\bm{u}}, {\bm{p}}); ({\bm{v}}, {\bm{q}}))=a_{h}({\bm{u}}, {\bm{v}})+b_{h}({\bm{p}}, v)+(\sigma u, v)+o_{h}( b; {\bm{u}}, {\bm{v}})-b_{h}({\bm{q}}, u)+c_{h}({\bm{p}}, {\bm{q}}).$$
Due to the absence of the $L^2$-norm of the pressure in the natural norm \eqref{FF-3}, the well-posedness of the discrete problem \eqref{eq:SD} cannot be guaranteed. To address this issue, we define the following norm, 
\begin{equation}\label{GG-1}
 |||({\bm{v}}_{h}, {\bm{q}}_{h})|||^{2}:= |||({\bm{v}}_{h}, {\bm{q}}_{h})|||_{\nu}^{2}+\gamma||q_{h}||^{2}\quad\forall ({\bm{v}}_{h}, {\bm{q}}_{h})\in X_{h}^{*}.
\end{equation}
\begin{lemma}\label{WW-2}
(Coercivity) There exists a constant $C_{s} > 0$ independent of $h_{K}$ and $\nu$, such that 
\begin{equation*}
 \inf_{({\bm{u}}_{h}, {\bm{p}}_{h})\in X_{h}^{*}}\sup_{({\bm{v}}_{h}, {\bm{q}}_{h})\in X_{h}^{*}}\frac{B_{h}(({\bm{u}}_{h}, {\bm{p}}_{h}); ({\bm{v}}_{h}, {\bm{q}}_{h}))}{|||({\bm{u}}_{h}, {\bm{p}}_{h})|||  |||({\bm{v}}_{h}, {\bm{q}}_{h})|||}\ge C_{s}.
\end{equation*}
\end{lemma}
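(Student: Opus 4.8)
The plan is to establish the inf-sup lower bound by the standard two-parameter testing strategy for stabilized equal-order mixed methods: for a fixed trial pair $({\bm{u}}_h, {\bm{p}}_h)\in X_h^*$ I will exhibit a test pair of the form $({\bm{v}}_h, {\bm{q}}_h) = ({\bm{u}}_h, {\bm{p}}_h) + \delta(-{\bm{w}}_h, 0)$, with $\delta > 0$ a small parameter to be fixed and ${\bm{w}}_h$ the velocity supplied by Lemma \ref{D-1}, and then show both that $B_h(({\bm{u}}_h, {\bm{p}}_h);({\bm{v}}_h,{\bm{q}}_h)) \gtrsim |||({\bm{u}}_h,{\bm{p}}_h)|||^2$ and that $|||({\bm{v}}_h,{\bm{q}}_h)||| \lesssim |||({\bm{u}}_h,{\bm{p}}_h)|||$. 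The quotient in the statement is then bounded below by a fixed constant $C_s$.

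First I would test with the trial pair itself. The off-diagonal terms cancel, $b_h({\bm{p}}_h, u_h) - b_h({\bm{p}}_h, u_h) = 0$, leaving $B_h(({\bm{u}}_h,{\bm{p}}_h);({\bm{u}}_h,{\bm{p}}_h)) = a_h({\bm{u}}_h,{\bm{u}}_h) + (\sigma u_h, u_h) + o_h(b;{\bm{u}}_h,{\bm{u}}_h) + c_h({\bm{p}}_h,{\bm{p}}_h)$. Here $c_h({\bm{p}}_h,{\bm{p}}_h) = |{\bm{p}}_h|_p^2$ and $(\sigma u_h, u_h) = \sum_K \sigma\|u_h\|_K^2$ are immediate. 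For the convection term, integrating the volume contribution by parts and using $\nabla\cdot b = 0$ together with the single-valuedness of $\bar{u}_h$ on interior facets (and $\bar{u}_h = 0$ on $\partial\Omega$) collapses the symmetric part and yields exactly $o_h(b;{\bm{u}}_h,{\bm{u}}_h) = |{\bm{u}}_h|_{up}^2$. For $a_h$, the consistency terms $\int_{\partial K}\nu(u_h - \bar{u}_h)\cdot\partial_n u_h$ are controlled by the discrete trace inequality \eqref{ZZ-2} and Young's inequality; choosing the penalty $\eta$ sufficiently large then gives $a_h({\bm{u}}_h,{\bm{u}}_h) \gtrsim \nu|||{\bm{u}}_h|||^2$. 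Altogether the diagonal test controls the full $\nu$-norm, $B_h(({\bm{u}}_h,{\bm{p}}_h);({\bm{u}}_h,{\bm{p}}_h)) \gtrsim |||({\bm{u}}_h,{\bm{p}}_h)|||_\nu^2$, but it is blind to $\|p_h\|$.

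The second test function recovers that missing $L^2$-control. Taking $w_h$ from Lemma \ref{D-1} and setting ${\bm{w}}_h = (w_h, \bar{w}_h)$ with $\bar{w}_h = \Pi_F v_{p_h}$, the terms $-b_h(0, u_h)$ and $c_h({\bm{p}}_h, 0)$ vanish and Lemma \ref{D-1} supplies $b_h({\bm{p}}_h, -w_h) \ge \|p_h\|^2 - c\nu^{1/2}\alpha^{-1/2}b_c^{-1}|{\bm{p}}_h|_p\|p_h\|$. The key auxiliary estimate is $|||{\bm{w}}_h|||^2 \lesssim \|v_{p_h}\|_1^2 \lesssim b_c^{-2}\|p_h\|^2$, obtained from the $H^1$-stability of $\Pi_K$, the bound \eqref{SD-1}, and the projection estimate \eqref{eq:S-2} for the jump $\|\Pi_K v_{p_h} - \Pi_F v_{p_h}\|_{\partial K}$; consequently $|||(-{\bm{w}}_h, 0)|||^2 \lesssim (\nu + \sigma + \|b\|_{\infty,\Omega})\|p_h\|^2$, which stays $\nu$-independent precisely because $0 < \nu < 1$. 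Continuity of $a_h$, a Cauchy--Schwarz bound for the reaction term, and continuity of $o_h$ then control the remaining cross terms, and after folding the Lemma \ref{D-1} residual into them (using $|{\bm{p}}_h|_p \le |||({\bm{u}}_h,{\bm{p}}_h)|||_\nu$ and $\nu^{1/2}<1$) I obtain $B_h(({\bm{u}}_h,{\bm{p}}_h);(-{\bm{w}}_h,0)) \ge \|p_h\|^2 - C|||({\bm{u}}_h,{\bm{p}}_h)|||_\nu\|p_h\|$.

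Finally I would combine the two tests. With $({\bm{v}}_h,{\bm{q}}_h) = ({\bm{u}}_h,{\bm{p}}_h) + \delta(-{\bm{w}}_h, 0)$,
\[
B_h(({\bm{u}}_h,{\bm{p}}_h);({\bm{v}}_h,{\bm{q}}_h)) \gtrsim |||({\bm{u}}_h,{\bm{p}}_h)|||_\nu^2 + \delta\|p_h\|^2 - \delta C|||({\bm{u}}_h,{\bm{p}}_h)|||_\nu\|p_h\|,
\]
and absorbing the cross term by Young's inequality and fixing $\delta$ small enough (and $\gamma$ accordingly) yields $B_h \gtrsim |||({\bm{u}}_h,{\bm{p}}_h)|||_\nu^2 + \tfrac{\delta}{2}\|p_h\|^2 \gtrsim |||({\bm{u}}_h,{\bm{p}}_h)|||^2$. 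Since the triangle inequality and the norm bound on ${\bm{w}}_h$ give $|||({\bm{v}}_h,{\bm{q}}_h)||| \le (1 + \delta C')|||({\bm{u}}_h,{\bm{p}}_h)|||$, the quotient is bounded below by a constant $C_s$ depending only on $\eta$, $\alpha$, $\sigma$, $b_c$, $\|b\|_{\infty,\Omega}$ and the shape-regularity constants, but not on $\nu$ or $h_K$. I expect the main obstacle to be the $\nu$-robust bounding of the convective cross term $o_h(b;{\bm{u}}_h, -{\bm{w}}_h)$ and of the Lemma \ref{D-1} residual $\nu^{1/2}\alpha^{-1/2}|{\bm{p}}_h|_p\|p_h\|$: both must be shown not to introduce negative powers of $\nu$, which hinges on the restriction $0 < \nu < 1$ and on absorbing the residual into $|{\bm{p}}_h|_p^2 \le |||({\bm{u}}_h,{\bm{p}}_h)|||_\nu^2$.
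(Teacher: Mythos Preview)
Your proposal is correct and follows essentially the same route as the paper: the same test pair $({\bm{v}}_h,{\bm{q}}_h)=({\bm{u}}_h,{\bm{p}}_h)+\delta(-{\bm{w}}_h,{\bm{0}})$ with ${\bm{w}}_h=(\Pi_K v_{p_h},\Pi_F v_{p_h})$, the same use of Lemma~\ref{D-1}, and the same Young's-inequality balancing. The only place the paper is more explicit is the convective cross term $o_h(b;{\bm{u}}_h,{\bm{w}}_h)$, which it splits via the identity $\tfrac12(A+B)=\tfrac12(A-B)-(A-B)+A$ so that each piece can be paired with either the upwind seminorm or $\sigma^{1/2}\|u_h\|$; your appeal to ``continuity of $o_h$'' is correct in spirit but would need exactly this decomposition to avoid a spurious $\nu^{-1/2}$ from the volume term.
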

\begin{proof}
Let $({\bm{v}}_{h}, {\bm{q}}_{h})=({\bm{u}}_{h}, {\bm{p}}_{h})+\delta(-{\bm{w}}_{h}, {\bm{0}})$, where $\delta>0$ is a reasonably selected parameter and take ${\bm{w}}_{h}=(\Pi_{K}v_{p_{h}}, \Pi_{F}v_{p_{h}})$. 

Firstly, it is easy to obtain that $$B_{h}(({\bm{u}}_{h}, {\bm{p}}_{h}); ({\bm{u}}_{h}, {\bm{p}}_{h}))\ge c_{1}|||({\bm{u}}_{h}, {\bm{p}}_{h})|||_{\nu}^{2}.$$


Then according to Lemma \ref{D-1} and \eqref{SD-1}, one derives
\begin{equation*}
\begin{aligned}
&B_{h}(({\bm{u}}_{h}, {\bm{p}}_{h}); (-{\bm{w}}_{h}, {\bm{0}}))\\
&=-a_{h}({\bm{u}}_{h}, {\bm{w}}_{h})-b_{h}({\bm{p}}_{h}, w_{h})-(\sigma u_{h}, w_{h})-o_{h}( b; {\bm{u}}_{h}, {\bm{w}}_{h})\\
&\ge -b_{h}({\bm{p}}_{h}, w_{h})-|a_{h}({\bm{u}}_{h}, {\bm{w}}_{h})|-|(\sigma u_{h}, w_{h})|-|o_{h}( b; {\bm{u}}_{h}, {\bm{w}}_{h})|\\
&\ge ||p_{h}||^{2}-c b_{c}^{-1}\nu^{\frac{1}{2}}\alpha^{-\frac{1}{2}}||p_{h}|| |{\bm{p}}_{h}|_{p}-|a_{h}({\bm{u}}_{h}, {\bm{v}}_{h})|-|(\sigma u_{h}, v_{h})|-|o_{h}( b; {\bm{u}}_{h}, {\bm{v}}_{h})|.
\end{aligned}
\end{equation*}
By using the theories of the $L^{2}$-projection (see \citep[Lemma 4]{Rhe1Well2:2017--A} and \citep[Lemma 6.11]{Di1Ern2:2012--M}) and \eqref{SD-1}, 
\begin{equation}\label{SD-9}
\begin{aligned}
\sum_{K\in\mathcal{T}_{h}}||\nabla(\Pi_{K}v_{p_{h}})||_{K}^{2}&\le c||v_{p_{h}}||_{1}^{2},
\end{aligned}
\end{equation}
 and from \eqref{eq:S-2}, we have
\begin{equation}\label{SD-8}
\sum_{K\in\mathcal{T}_{h}}\frac{1}{h_{K}}||\Pi_{F}v_{p_{h}}-\Pi_{K}v_{p_{h}}||_{\partial K}^{2}\le c||v_{p_{h}}||_{1}^{2}.
\end{equation}
Then apply the H$\ddot{o}$lder inequality and the discrete trace inequality \eqref{ZZ-2}, 
\begin{equation*}
|a_{h}({\bm{u}}_{h}, {\bm{w}}_{h})|\le c\nu^{\frac{1}{2}} b_{c}^{-1}(1+\eta^{\frac{1}{2}})(1+\eta^{-\frac{1}{2}})|||({\bm{u}}_{h}, {\bm{0}})|||_{\nu}||p_{h}||.
\end{equation*}
From the stability of $L^2$-projection and \eqref{SD-1},
\begin{equation*}
\begin{aligned}
|(\sigma u_{h}, w_{h})|&\le \sigma\left(\sum_{K\in\mathcal{T}_{h}}||u_{h}||_{K}^{2}\right)^{\frac{1}{2}}\left(\sum_{K\in\mathcal{T}_{h}}||w_{h}||_{K}^{2}\right)^{\frac{1}{2}}\\
&= \sigma\left(\sum_{K\in\mathcal{T}_{h}}||u_{h}||_{K}^{2}\right)^{\frac{1}{2}}\left(\sum_{K\in\mathcal{T}_{h}}||\Pi_{K}v_{p_{h}}||_{K}^{2}\right)^{\frac{1}{2}}\\
&\le \sigma\left(\sum_{K\in\mathcal{T}_{h}}||u_{h}||_{K}^{2}\right)^{\frac{1}{2}}\left(\sum_{K\in\mathcal{T}_{h}}||v_{p_{h}}||_{K}^{2}\right)^{\frac{1}{2}}\\
&\le c\sigma^{\frac{1}{2}}|||({\bm{u}}_{h}, {\bm{0}})|||_{\nu} ||v_{p_{h}}||_{1}\\
&\le c\sigma^{\frac{1}{2}} b_{c}^{-1}|||({\bm{u}}_{h}, {\bm{0}})|||_{\nu}||p_{h}||.
\end{aligned}
\end{equation*}
To manipulate the convection term, we apply the algebraic identity $\frac{1}{2}(A+B) = \frac{1}{2}(A-B) - (A-B) + A$. Expanding the terms yields:
\begin{equation*}
\begin{aligned}
&o_{h}( b; {\bm{u}}_{h}, {\bm{w}}_{h})\\&=-\sum_{K\in\mathcal{T}_{h}}\int_{K}(u_{h}\otimes  b):\nabla w_{h}\mr{d}x+\sum_{K\in\mathcal{T}_{h}}\int_{\partial K}\frac{1}{2}( b\cdot n)(u_{h}+\bar{u}_{h})\cdot (w_{h}-\bar{w}_{h})\mr{d}s,\\
&+\sum_{K\in\mathcal{T}_{h}}\int_{\partial K}\frac{1}{2}| b\cdot n|(u_{h}-\bar{u}_{h})\cdot (w_{h}-\bar{w}_{h})\mr{d}s\\
&=-\sum_{K\in\mathcal{T}_{h}}\int_{K}(u_{h}\otimes  b):\nabla w_{h}\mr{d}x+\sum_{K\in\mathcal{T}_{h}}\int_{\partial K}\frac{1}{2}( b\cdot n)(u_{h}-\bar{u}_{h})\cdot (w_{h}-\bar{w}_{h})\mr{d}s\\
&+\sum_{K\in\mathcal{T}_{h}}\int_{\partial K}( b\cdot n)(\bar{u}_{h}-u_{h})\cdot (w_{h}-\bar{w}_{h})\mr{d}s+\sum_{K\in\mathcal{T}_{h}}\int_{\partial K}( b\cdot n)u_{h}\cdot (w_{h}-\bar{w}_{h})\mr{d}s\\
&+\sum_{K\in\mathcal{T}_{h}}\int_{\partial K}\frac{1}{2}| b\cdot n|(u_{h}-\bar{u}_{h})\cdot (w_{h}-\bar{w}_{h})\mr{d}s.
\end{aligned}
\end{equation*}
Then from the H$\ddot{o}$lder inequality, \eqref{SD-9} and \eqref{SD-8}, we have
\begin{equation*}
\begin{aligned}
&|-\sum_{K\in\mathcal{T}_{h}}\int_{K}(u_{h}\otimes  b):\nabla w_{h}\mr{d}x|\\
&\le || b||_{\infty}\left(\sum_{K\in\mathcal{T}_{h}}||u_{h}||_{K}^{2}\right)^{\frac{1}{2}}\left(\sum_{K\in\mathcal{T}_{h}}||\nabla w_{h}||_{K}^{2}\right)^{\frac{1}{2}}\\
&\le C|| b||_{\infty}\left(\sum_{K\in\mathcal{T}_{h}}||u_{h}||_{K}^{2}\right)^{\frac{1}{2}}||v_{p_{h}}||_{1}\\
&\le C|| b||_{\infty} b_{c}^{-1}\sigma^{-\frac{1}{2}}|||({\bm{u}}_{h}, {\bm{0}})|||_{\nu}||p_{h}||,
\end{aligned}
\end{equation*} 
\begin{equation*}
\begin{aligned}
&|\sum_{K\in\mathcal{T}_{h}}\int_{\partial K}\frac{1}{2}( b\cdot n)(u_{h}-\bar{u}_{h})\cdot (w_{h}-\bar{w}_{h})\mr{d}s|\\
&\le Ch_{K}^{\frac{1}{2}}|| b||_{\infty}^{\frac{1}{2}}\left(\sum_{K\in\mathcal{T}_{h}}\int_{\partial K}| b\cdot n|(u_{h}-{\bar{u}}_{h})^{2}\mr{d}s\right)^{\frac{1}{2}}\left(\sum_{K\in\mathcal{T}_{h}}\int_{\partial K}\frac{1}{h_{K}}(w_{h}-{\bar{w}}_{h})^{2}\mr{d}s\right)^{\frac{1}{2}}\\
&\le Ch_{K}^{\frac{1}{2}}|| b||_{\infty}^{\frac{1}{2}}|||({\bm{u}}_{h}, {\bm{0}})|||_{\nu}\left(\sum_{K\in\mathcal{T}_{h}}\frac{1}{h_{K}}\Vert w_{h}-\bar{w}_{h}\Vert_{\partial K}^{2}\right)^{\frac{1}{2}}\\
&\le Ch_{K}^{\frac{1}{2}}|| b||_{\infty}^{\frac{1}{2}}|||({\bm{u}}_{h}, {\bm{0}})|||_{\nu}||v_{p_{h}}||_{1}\\
&\le Ch_{K}^{\frac{1}{2}}|| b||_{\infty}^{\frac{1}{2}} b_{c}^{-1}|||({\bm{u}}_{h}, {\bm{0}})|||_{\nu}||p_{h}||,
\end{aligned}
\end{equation*}
and
\begin{equation*}
\begin{aligned}
&\sum_{K\in\mathcal{T}_{h}}\int_{\partial K}\frac{1}{2}( b\cdot n)u_{h}\cdot (w_{h}-\bar{w}_{h})\mr{d}s\\
&\le C|| b||_{\infty}\left(\sum_{K\in\mathcal{T}_{h}}h_{K}||u_{h}||_{\partial K}^{2}\right)^{\frac{1}{2}}\left(\sum_{K\in\mathcal{T}_{h}}\frac{1}{h_{K}}||w_{h}-\bar{w}_{h}||_{\partial K}^{2}\right)^{\frac{1}{2}}\\
&\le C|| b||_{\infty}\left(\sum_{K\in\mathcal{T}_{h}}||u_{h}||_{K}^{2}\right)^{\frac{1}{2}}||v_{p_{h}}||_{1}\\
&\le C|| b||_{\infty}\sigma^{-\frac{1}{2}} b_{c}^{-1}|||({\bm{u}}_{h}, {\bm{0}})|||_{\nu} ||p_{h}||.
\end{aligned}
\end{equation*}
In summary, we obtain
\begin{equation*}
\begin{aligned}
&B_{h}(({\bm{u}}_{h}, {\bm{p}}_{h}); (-{\bm{w}}_{h}, {\bm{0}}))\\
&\ge ||p_{h}||^{2}-c b_{c}^{-1}\nu^{\frac{1}{2}}\alpha^{-\frac{1}{2}}||p_{h}|||{\bm{p}}_{h}|_{p}-c\nu^{\frac{1}{2}}(1+\eta^{\frac{1}{2}})(1+\eta^{-\frac{1}{2}}) b_{c}^{-1}|||({\bm{u}}_{h}, {\bm{p}}_{h})|||_{\nu} ||p_{h}||\\
&-C(\sigma^{\frac{1}{2}}+||b||_{\infty}\sigma^{-\frac{1}{2}}+||b||_{\infty}^{\frac{1}{2}}h^{\frac{1}{2}}) b_{c}^{-1}|||({\bm{u}}_{h}, {\bm{p}}_{h})|||_{\nu} ||p_{h}||\\
&\ge ||p_{h}||^{2}-c b_{c}^{-1}\nu^{\frac{1}{2}}\alpha^{-\frac{1}{2}}||p_{h}|||{\bm{p}}_{h}|_{p}-c_{2}(\nu^{\frac{1}{2}}+h^{\frac{1}{2}}+1)|||({\bm{u}}_{h}, {\bm{p}}_{h})|||_{\nu} ||p_{h}||.
\end{aligned}
\end{equation*}

Now we use $({\bm{v}}_{h}, {\bm{q}}_{h})=({\bm{u}}_{h}, {\bm{p}}_{h})+\delta(-{\bm{w}}_{h}, {\bm{0}})$, which yields
\begin{equation*}
\begin{aligned}
B_{h}(({\bm{u}}_{h}, {\bm{p}}_{h}), ({\bm{v}}_{h}, {\bm{q}}_{h}))&=B_{h}(({\bm{u}}_{h}, {\bm{p}}_{h}), ({\bm{u}}_{h}, {\bm{p}}_{h}))+\delta B_{h}(({\bm{u}}_{h}, {\bm{p}}_{h}), (-{\bm{w}}_{h}, {\bm{0}}))\\
&\ge c_{1}|||({\bm{u}}_{h}, {\bm{p}}_{h})|||_{\nu}^{2}+\delta ||p_{h}||^{2}-c\delta b_{c}^{-1}\nu^{\frac{1}{2}}\alpha^{-\frac{1}{2}}||p_{h}|||{\bm{p}}_{h}|_{p}\\
&-\delta c_{2}(\nu^{\frac{1}{2}}+h^{\frac{1}{2}}+1)|||({\bm{u}}_{h}, {\bm{p}}_{h})|||_{\nu}||p_{h}||\\
&\ge (c_{1}-\frac{\delta c_{2}\varepsilon_{2}}{2})|||({\bm{u}}_{h}, {\bm{p}}_{h})|||_{\nu}^{2}+(c_{1}-\frac{\delta c\varepsilon_{1}}{2})|{\bm{p}}_{h}|_{p}^{2}\\
&+\delta\left(1-\frac{c\alpha^{-1} b_{c}^{-2}\nu}{2\varepsilon_{1}}-\frac{c_{2}}{2\varepsilon_{2}}(\nu^{\frac{1}{2}}+h^{\frac{1}{2}}+1)^{2}\right)||p_{h}||^{2}\\
&\ge (c_{1}-\frac{\delta c_{2}\varepsilon_{2}}{2})|||({\bm{u}}_{h}, {\bm{p}}_{h})|||_{\nu}^{2}+(c_{1}-\frac{\delta c\varepsilon_{1}}{2})|{\bm{p}}_{h}|_{p}^{2}\\
&+\delta\left(1-\frac{c\alpha^{-1} b_{c}^{-2}}{2\varepsilon_{1}}-\frac{c_{2}}{2\varepsilon_{2}}\right)||p_{h}||^{2},
\end{aligned}
\end{equation*}
where $\varepsilon_{1}> 0$ and $\varepsilon_{2} > 0$ are free parameters that can be appropriately chosen. By selecting $\varepsilon_{1}$ and $\varepsilon_{2}$ large enough such that 
\begin{equation*}
\begin{aligned}
B_{h}(({\bm{u}}_{h}, {\bm{p}}_{h}), ({\bm{v}}_{h}, {\bm{q}}_{h}))\ge (c_{1}-\frac{\delta c_{2}\varepsilon_{2}}{2})|||({\bm{u}}_{h}, {\bm{p}}_{h})|||_{\nu}^{2}+(c_{1}-\frac{\delta c\varepsilon_{1}}{2})|{\bm{p}}_{h}|_{p}^{2}+\delta||p_{h}||^{2},
\end{aligned}
\end{equation*}
where $c_ {1}>0$ and $c_ {2}>0 $ are constants independent of $\nu$ and $h$.
Finally, choosing $\delta$ sufficiently small ensures the existence of a constant $c_{3} > 0$ such that
\begin{equation*}
\begin{aligned}
B_{h}(({\bm{u}}_{h}, {\bm{p}}_{h}), ({\bm{v}}_{h}, {\bm{q}}_{h}))&\ge c_{3}|||({\bm{u}}_{h}, {\bm{p}}_{h})|||^{2}.
\end{aligned}
\end{equation*}

Finally, according to the energy norm \eqref{FF-3}, we obtain
\begin{equation*}
\begin{aligned}
&|||({\bm{v}}_{h}, {\bm{q}}_{h})|||\\
&\le |||({\bm{u}}_{h}, {\bm{p}}_{h})|||+\delta|||(-{\bm{w}}_{h}, {\bm{0}})|||\\
&\le |||({\bm{u}}_{h}, {\bm{p}}_{h})|||+\delta\nu^{\frac{1}{2}}|||{\bm{w}}_{h}|||+\delta\sigma^{\frac{1}{2}}||w_{h}||+\delta\left(\frac{1}{2}\sum_{K\in\mathcal{T}_{h}}|| |b\cdot n|^{\frac{1}{2}}(w_{h}-\bar{w}_{h})||_{\partial K}^{2}\right)^{\frac{1}{2}}\\
&\le |||({\bm{u}}_{h}, {\bm{p}}_{h})|||+c\delta(1+\eta^{\frac{1}{2}})\nu^{\frac{1}{2}}||v_{p_{h}}||_{1}+\delta\sigma^{\frac{1}{2}}||v_{p_{h}}||_{1}+c\delta h^{\frac{1}{2}}||b||_{\infty}^{\frac{1}{2}}||v_{p_{h}}||_{1}\\
&\le |||({\bm{u}}_{h}, {\bm{p}}_{h})|||+c\delta b_{c}^{-1}\nu^{\frac{1}{2}}||p_{h}||+\delta\sigma^{\frac{1}{2}} b_{c}^{-1}||p_{h}||+\delta h^{\frac{1}{2}}||b||_{\infty}^{\frac{1}{2}} b_{c}^{-1}||p_{h}||\\
&\le |||({\bm{u}}_{h}, {\bm{p}}_{h})|||+c_{3}\delta(\nu^{\frac{1}{2}}+h^{\frac{1}{2}}+1) b_{c}^{-1}||p_{h}||\\
&\le |||({\bm{u}}_{h}, {\bm{p}}_{h})|||+c_{3}\delta b_{c}^{-1}||p_{h}||\\
&\le c_{4}|||({\bm{u}}_{h}, {\bm{p}}_{h})|||,
\end{aligned}
\end{equation*}
where $c_{3}, c_{4} > 0$ are constants independent of $h$ and $\nu$, and the last inequality follows from the fact that $||p_{h}||$ is controlled by the norm $|||({\bm{u}}_{h}, {\bm{p}}_{h})|||$.

Combining this bound with the previously established coercivity result, we obtain $B_{h}(({\bm{u}}_{h}, {\bm{p}}_{h}), ({\bm{v}}_{h}, {\bm{q}}_{h}))\ge c_{3}c_{4}^{-1}|||({\bm{u}}_{h}, {\bm{p}}_{h})||| |||({\bm{v}}_{h}, {\bm{q}}_{h})|||$.
\end{proof}


\section{Interpolation and interpolation estimates}
\subsection{Interpolation operators and approximation properties}
Next we introduce the following approximation and discretization errors for the velocity and the pressure, respectively: 
\begin{equation*}
\begin{aligned}
&{\bm{e}}_{u}=(e_{u}, e_{\hat{u}}), {\bm{\eta}}_{u}=(\eta_{u}, \eta_{\hat{u}}), {\bm{\xi}}_{u}=(\xi_{u}, \xi_{\hat{u}}),\\
&{\bm{e}}_{p}=(e_{p}, e_{\hat{p}}), {\bm{\eta}}_{p}=(\eta_{p}, \eta_{\hat{p}}), {\bm{\xi}}_{p}=(\xi_{p}, \xi_{\hat{p}}),\\
&{\bm{e}}_{u}= {\bm{u}}-{\bm{u}}_{h}={\bm{\eta}}_{u}+{\bm{\xi}}_{u},\\
&{\bm{e}}_{p} = {\bm{p}}-{\bm{p}}_{h} = {\bm{\eta}}_{p} + {\bm{\xi}}_{p} ,\\
&{\bm{\eta}}_{u}={\bm{u}}-\Pi_{U} {\bm{u}},\quad {\bm{\xi}}_{u}=\Pi_{U}{\bm{u}}-{\bm{u}}_{h},\\
&{\bm{\eta}}_{p}={\bm{p}} - \Pi_{Q}{\bm{p}},\quad {\bm{\xi}}_{p}=\Pi_{Q}{\bm{p}}-{\bm{p}}_{h},
\end{aligned}
\end{equation*}
where $\Pi_{U}{\bm{u}}=(\Pi_{K}u, \Pi_{F}u)$ denote the $L^{2}$-projection onto $V_{h}\times\bar{V}_{h}$, $\Pi_{Q}{\bm{p}}=({\mathcal{I}}_{K}p, \mathcal{I}_{F}p)$ are Lagrange interpolations of order $k$ on $Q_{h}\times\bar{Q}_{h}$, respectively. 

On the one hand, assuming $\Pi_{K}$ is the local $L^{2}$-projection operator, we have the following approximation property (see \cite{Cro18jTho2:1987--motified, Han1Hou2:2021--motified, Di1Ern2:2012--M}):
\begin{equation}\label{P-111}
|| v- \Pi_{K} v||_{K} + h_{K}^{\frac{1}{2}}\Vert v-\Pi_{K} v\Vert_{\partial K} \le Ch_{K}^{k+1}|v|_{H^{k+1}(K)}
\end{equation}
for all $v\in H^{k+1}(K)$. Similarly, according to the projection results in \citep[Lemma 5.1]{Che1:2021--O}, we define $\Pi_{F}z\in P_{k}(F)$ for any $z\in L^{2}(F)$ by the orthogonality condition $(\Pi_{F}z, v)_{F}=(z, v)_{F}$ for all $v\in P_{k}(F)$. For any function $z\in W^{k+1, m}(F)$, the following estimate holds:
\begin{align}
|| z-\Pi_{F}z ||_{L^{m}(F)}\le C h_{F}^{k+1} |z|_{W^{k+1, m}(F)}, \quad m=2, \infty, \label{eq:interpolation-theory-3}
\end{align}
where $h_{F}$ denotes the length of $F\in\mathcal{F}_{h}$.

On the other hand, assuming $\mathcal{I}_{K}$ is the Lagrange interpolation, we have the following approximation property (see \cite{Cro18jTho2:1987--motified, Han1Hou2:2021--motified, Di1Ern2:2012--M}):
\begin{equation}\label{BB-1}
|u -\mathcal{I}_{K}u|_{W^{j, p}(K)}\le Ch^{s-j}|u|_{W^{s, p}(K)},\quad 0 \le j \le s \le k+1, 
\end{equation}
where $s > d/p$ when $1 < p \le\infty$ and $s \ge d$ when $p = 1$.

\subsection{Interpolation error estimate}
\begin{theorem}\label{DDD-4}
Let $(u, p)\in [H^{k+1}(\Omega)]^{d}\times H^{k+1}(\Omega)$ with $k \ge 1$. Then, there exists a constant $C > 0$ independent of $h$ and $\nu$, such that
\begin{equation*}
\begin{aligned}
|||({\bm{\eta}}_{u}, {\bm{\eta}}_{p})|||&\le C\left(\nu^{\frac{1}{2}}h^{k}+|| b||_{\infty}^{\frac{1}{2}}h^{k+\frac{1}{2}}+\sigma^{\frac{1}{2}} h^{k+1}\right)||u||_{k+1}+Ch^{k+1}||p||_{k+1}. 
\end{aligned}
\end{equation*}
Note that $C$ is dependent on the parameters $\eta$, $\alpha$ and $\gamma$.
\end{theorem}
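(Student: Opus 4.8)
The plan is to expand the squared norm $|||({\bm{\eta}}_{u}, {\bm{\eta}}_{p})|||^{2}$ according to \eqref{GG-1} and \eqref{FF-3} into its five constituent pieces, namely $\nu|||{\bm{\eta}}_{u}|||^{2}$, $\sum_{K}\sigma\|\eta_{u}\|_{K}^{2}$, $|{\bm{\eta}}_{u}|_{up}^{2}$, $|{\bm{\eta}}_{p}|_{p}^{2}$ and $\gamma\|\eta_{p}\|^{2}$, and to estimate each separately so that the four summands on the right-hand side emerge. Throughout I use that the exact traces obey $\zeta(u)=u$ and $\zeta(p)=p$ on $\mathcal{F}$, so that on $\partial K$ one has $\eta_{\hat{u}}-\eta_{u}=\Pi_{K}u-\Pi_{F}u$ and $\eta_{\hat{p}}-\eta_{p}=\mathcal{I}_{K}p-\mathcal{I}_{F}p$.

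For the diffusion term I would use the standard $H^{1}$-approximation of the $L^{2}$-projection, $\|\nabla(u-\Pi_{K}u)\|_{K}\le Ch_{K}^{k}|u|_{H^{k+1}(K)}$, together with \eqref{P-111}. The only nonroutine piece is the interface jump $\|\eta_{\hat{u}}-\eta_{u}\|_{\partial K}=\|\Pi_{K}u-\Pi_{F}u\|_{\partial K}$. Since $\Pi_{K}u|_{F}\in P_{k}(F)$ is left invariant by $\Pi_{F}$, one has the identity $\Pi_{K}u-\Pi_{F}u=-\Pi_{F}(u-\Pi_{K}u)$, and the $L^{2}(F)$-stability of $\Pi_{F}$ reduces the jump to $\|u-\Pi_{K}u\|_{\partial K}\le Ch_{K}^{k+1/2}|u|_{H^{k+1}(K)}$ via \eqref{P-111}. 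Multiplying by $\eta/h_{K}$ and by $\nu$ and summing over $K$ gives $\nu|||{\bm{\eta}}_{u}|||^{2}\le C\nu h^{2k}\|u\|_{k+1}^{2}$, i.e. the $\nu^{1/2}h^{k}$ contribution. The reaction term is controlled directly by the volume estimate in \eqref{P-111}, producing $\sigma^{1/2}h^{k+1}\|u\|_{k+1}$. For the upwind seminorm I bound $|b\cdot n|^{1/2}\le\|b\|_{\infty}^{1/2}$ pointwise and use $\eta_{u}-\eta_{\hat{u}}=\Pi_{F}(u-\Pi_{K}u)$ with the same $L^{2}(F)$-stability and \eqref{P-111}, giving $|{\bm{\eta}}_{u}|_{up}^{2}\le C\|b\|_{\infty}h^{2k+1}\|u\|_{k+1}^{2}$, hence the $\|b\|_{\infty}^{1/2}h^{k+1/2}$ term.

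The decisive step is the pressure stabilization seminorm $|{\bm{\eta}}_{p}|_{p}^{2}=\sum_{K}\frac{\alpha}{\nu}h_{K}\|\mathcal{I}_{K}p-\mathcal{I}_{F}p\|_{\partial K}^{2}$, which carries the dangerous factor $\nu^{-1}$; a crude estimate would leave a $\nu^{-1/2}h^{k+1}\|p\|_{k+1}$ term and destroy semi-robustness. This is the main obstacle, and it is overcome not by estimation but by exact cancellation arising from the choice of $\Pi_{Q}{\bm{p}}=(\mathcal{I}_{K}p,\mathcal{I}_{F}p)$ as \emph{compatible} Lagrange interpolants: on each facet $F\subset\partial K$ both the trace $\mathcal{I}_{K}p|_{F}$ and $\mathcal{I}_{F}p$ are the unique degree-$k$ polynomial interpolating $p$ at the Lagrange nodes of $F$, so $\mathcal{I}_{K}p|_{F}=\mathcal{I}_{F}p$ and the jump $\eta_{\hat{p}}-\eta_{p}$ vanishes identically, whence $|{\bm{\eta}}_{p}|_{p}=0$. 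Finally $\gamma\|\eta_{p}\|^{2}=\gamma\|p-\mathcal{I}_{K}p\|^{2}$ is bounded by the Lagrange estimate \eqref{BB-1} with $j=0$, $s=k+1$, $p=2$, yielding the $Ch^{k+1}\|p\|_{k+1}$ contribution. Taking square roots and applying $\sqrt{a+b}\le\sqrt{a}+\sqrt{b}$ assembles the four pieces into the claimed bound, with $C$ depending on $\eta,\alpha,\gamma$ but independent of $\nu$ and $h$.
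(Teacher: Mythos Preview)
Your proposal is correct and follows essentially the same term-by-term strategy as the paper, including the decisive observation that the Lagrange interpolants satisfy $\mathcal{I}_{K}p|_{F}=\mathcal{I}_{F}p$ so that $|{\bm{\eta}}_{p}|_{p}=0$ and the dangerous $\nu^{-1}$ factor is eliminated by exact cancellation rather than estimation. The only minor difference is in the jump estimate: the paper bounds $\|\eta_{\hat{u}}-\eta_{u}\|_{\partial K}$ by the triangle inequality and separately invokes \eqref{P-111} and \eqref{eq:interpolation-theory-3}, whereas you use the identity $\Pi_{K}u-\Pi_{F}u=-\Pi_{F}(u-\Pi_{K}u)$ together with $L^{2}(F)$-stability of $\Pi_{F}$ to reduce directly to $\|u-\Pi_{K}u\|_{\partial K}$; this is a cleaner variant but not a genuinely different route.
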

\begin{proof}
By the definition of the energy norm in \eqref{GG-1}, we have
\begin{equation*}
\begin{aligned}
|||({\bm{\eta}}_{u}, {\bm{\eta}}_{p})|||^{2}&=\nu \sum_{K\in\mathcal{T}_{h}}||\nabla\eta_{u}||_{K}^{2}+\sum_{K\in\mathcal{T}_{h}}\frac{\nu\eta}{h_{K}}||\eta_{\hat{u}}-\eta_{u}||_{\partial K}^{2}\\
&+\sigma\sum_{K\in\mathcal{T}_{h}}||\eta_{u}||^{2}_{K}+\frac{1}{2}\sum_{K\in\mathcal{T}_{h}}||| b\cdot n|^{\frac{1}{2}}(\eta_{u}-\eta_{\hat{u}})||_{\partial K}^{2}\\
&+\sum_{K\in\mathcal{T}_{h}}\left\Vert\left(\frac{\alpha}{\nu}\right)^{\frac{1}{2}}(\eta_{p}-\eta_{\hat{p}})\right\Vert_{\partial K}^{2}+\gamma||\eta_{p}||^{2}.
\end{aligned}
\end{equation*}
From \eqref{P-111}, one has
\begin{equation*}
\begin{aligned}
\nu\sum_{K\in\mathcal{T}_{h}}||\nabla\eta_{u}||_{K}^{2}&\le C\nu h^{2k}||u||_{k+1}^{2}
\end{aligned}
\end{equation*}
Using the triangle inequality, along with \eqref{P-111} and \eqref{eq:interpolation-theory-3}, we have
\begin{equation*}
\begin{aligned}
\sum_{K\in\mathcal{T}_{h}}\frac{\nu\eta}{h_{K}}||\eta_{\hat{u}}-\eta_{u}||_{\partial K}^{2}\le C\sum_{K\in\mathcal{T}_{h}}\frac{\nu}{h_{K}}(||\eta_{\hat{u}}||_{\partial K}^{2}+||\eta_{u}||_{\partial K}^{2})\le C\nu h^{2k}||u||_{k+1}^{2}.
\end{aligned}
\end{equation*}
Applying \eqref{P-111}, it follows directly that
\begin{equation*}
\sigma\sum_{K\in\mathcal{T}_{h}}||\eta_{u}||^{2}_{K}\le C\sigma h^{2(k+1)}||u||_{k+1}^{2}.
\end{equation*}
Using the interpolation error estimate \eqref{P-111} and \eqref{eq:interpolation-theory-3}, we get
\begin{equation*}
\frac{1}{2}\sum_{K\in\mathcal{T}_{h}}||| b\cdot n|^{\frac{1}{2}}(\eta_{u}-\eta_{\hat{u}})||_{\partial K}^{2}\le C|| b||_{\infty}h^{2k+1}||u||_{k+1}^{2}.
\end{equation*}
Then from the definition of  $\mathcal{I}_{K}$ and $\mathcal{I}_{F}$, we have $\mathcal{I}_{F}p=\mathcal{I}_{K}p|_{\mathcal{F}}\in \bar{Q}_{h}$ and
\begin{equation*}
\sum_{K\in\mathcal{T}_{h}}\left\Vert\left(\frac{\alpha}{\nu}\right)^{\frac{1}{2}}(\eta_{p}-\eta_{\hat{p}})\right\Vert_{\partial K}^{2}=0.
\end{equation*}
Finally, applying \eqref{BB-1} yields
$\gamma||\eta_{p}||^{2}\le Ch^{2(k+1)}||p||_{k+1}^{2}$.

Summing the contributions of all terms and taking the square root completes the proof.
\end{proof}
%
%
%
\section{Error estimates}
\begin{theorem}\label{DDD-2}
Let $(u, p)\in [H^{k+1}(\Omega)]^{d}\times H^{k+1}(\Omega)$ with $k \ge 1$. Then, there exists a constant $C > 0$ independent of $h$ and $\nu$, 
\begin{equation*}
\begin{aligned}
|||({\bm{\xi}}_{u}, {\bm{\xi}}_{q})|||&\le Ch^{k}(||u||_{k+1}+||p||_{k+1}). 
\end{aligned}
\end{equation*}
\end{theorem}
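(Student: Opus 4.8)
The plan is to combine the discrete stability of Lemma~\ref{WW-2} with Galerkin orthogonality, thereby reducing the estimate on $(\bm{\xi}_u,\bm{\xi}_p)$ to an approximation-theoretic bound on $B_h$ applied to the interpolation error $(\bm{\eta}_u,\bm{\eta}_p)$. Since $(\bm{\xi}_u,\bm{\xi}_p)=(\Pi_U\bm{u}-\bm{u}_h,\Pi_Q\bm{p}-\bm{p}_h)\in X_h^*$ is a discrete function, Lemma~\ref{WW-2} applies directly and gives
\begin{equation*}
C_s\,|||(\bm{\xi}_u,\bm{\xi}_p)|||\le \sup_{(\bm{v}_h,\bm{q}_h)\in X_h^*}\frac{B_h((\bm{\xi}_u,\bm{\xi}_p);(\bm{v}_h,\bm{q}_h))}{|||(\bm{v}_h,\bm{q}_h)|||}.
\end{equation*}
Subtracting the discrete weak form \eqref{eq:SD} from the consistency lemma shows that $B_h((\bm{e}_u,\bm{e}_p);(\bm{v}_h,\bm{q}_h))=0$ for all $(\bm{v}_h,\bm{q}_h)\in X_h^*$; writing $\bm{e}_u=\bm{\eta}_u+\bm{\xi}_u$ and $\bm{e}_p=\bm{\eta}_p+\bm{\xi}_p$ then yields the orthogonality $B_h((\bm{\xi}_u,\bm{\xi}_p);\cdot)=-B_h((\bm{\eta}_u,\bm{\eta}_p);\cdot)$. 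It therefore suffices to establish a continuity estimate $|B_h((\bm{\eta}_u,\bm{\eta}_p);(\bm{v}_h,\bm{q}_h))|\le C\Lambda\,|||(\bm{v}_h,\bm{q}_h)|||$ with $\Lambda\le Ch^{k}(\|u\|_{k+1}+\|p\|_{k+1})$ and $C$ independent of $\nu$.

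The heart of the argument is this continuity estimate, carried out term by term on $B_h((\bm{\eta}_u,\bm{\eta}_p);(\bm{v}_h,\bm{q}_h))=a_h(\bm{\eta}_u,\bm{v}_h)+b_h(\bm{\eta}_p,v_h)+(\sigma\eta_u,v_h)+o_h(b;\bm{\eta}_u,\bm{v}_h)-b_h(\bm{q}_h,\eta_u)+c_h(\bm{\eta}_p,\bm{q}_h)$. Two decisive simplifications come from the choice of projectors. First, because $\Pi_U$ is the $L^2$-projection, $\eta_u$ is $L^2(K)$-orthogonal to $[P_k(K)]^d$: this makes $(\sigma\eta_u,v_h)=0$, annihilates the volume part of $b_h(\bm{q}_h,\eta_u)$ after integration by parts (leaving only $\sum_K\int_{\partial K}(\bar q_h-q_h)\,\eta_u\cdot n\,\mr{d}s$), and removes the leading part of the convective volume integral. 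Second, since $\Pi_Q$ is the Lagrange interpolation with $\mathcal{I}_F p=\mathcal{I}_K p|_{\mathcal{F}}$, the pressure jump vanishes, $\eta_p-\eta_{\hat p}=0$ on $\mathcal{F}$; hence $c_h(\bm{\eta}_p,\bm{q}_h)=0$, and integrating $b_h(\bm{\eta}_p,v_h)$ by parts leaves only the benign volume term $\sum_K\int_K\nabla\eta_p\cdot v_h\,\mr{d}x$. The surviving terms are then bounded by weighted Cauchy--Schwarz, the discrete trace inequality \eqref{ZZ-2}, and the interpolation bounds \eqref{P-111}, \eqref{eq:interpolation-theory-3}, \eqref{BB-1}.

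The crucial point for semi-robustness is that no bound produces a negative power of $\nu$ in front of $h^{k}$. The viscous form $a_h$ naturally carries $\nu^{1/2}$, pairing the $\nu^{1/2}$-weighted part of $|||\bm{\eta}_u|||$ against $\nu^{1/2}|||\bm{v}_h|||$ and using \eqref{ZZ-2} for the consistency terms; the reaction term vanishes; and for the surviving facet term of $b_h(\bm{q}_h,\eta_u)$ one extracts the raw jump $\bar q_h-q_h$ from the $\nu^{-1/2}$-weighted seminorm $|\bm{q}_h|_p$ at the cost of a factor $\nu^{1/2}$, which, since $\nu<1$, keeps the bound $\nu$-robust. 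I expect the main obstacle to be the convection term $o_h(b;\bm{\eta}_u,\bm{v}_h)$: its volume integral $-\sum_K\int_K(\eta_u\otimes b):\nabla v_h\,\mr{d}x$ cannot be controlled directly without a $\nu^{-1/2}$, and the remedy is to subtract the elementwise mean $\bar b$ of $b$, drop $\int_K\eta_u\cdot(\bar b\cdot\nabla)v_h\,\mr{d}x=0$ by $L^2$-orthogonality, and estimate the remainder via $\|b-\bar b\|_{\infty,K}\le Ch_K|b|_{1,\infty}$ together with an inverse inequality, trading $\nabla v_h$ for $h_K^{-1}v_h$ so that $\|v_h\|$ is absorbed by the $\sigma$-weighted part of the norm; this gives a higher-order $\mathcal{O}(h^{k+1})$ contribution. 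The two facet integrals of $o_h$ are measured against the upwind seminorm $|\bm{v}_h|_{up}$ and produce the $\|b\|_\infty^{1/2}h^{k+1/2}$ term.

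Collecting all contributions yields $\Lambda\le C\big(\nu^{1/2}h^{k}+\|b\|_\infty^{1/2}h^{k+1/2}+\sigma^{1/2}h^{k+1}\big)\|u\|_{k+1}+Ch^{k}\|p\|_{k+1}$, where the leading pressure term $Ch^{k}\|p\|_{k+1}$ originates from the volume integral of $b_h(\bm{\eta}_p,v_h)$ (bounded via $\|\nabla\eta_p\|_K\le Ch^k\|p\|_{k+1,K}$ from \eqref{BB-1}) and the velocity contributions coincide with those already assembled in Theorem~\ref{DDD-4}. Since $0<\nu<1$ we have $\nu^{1/2}h^{k}\le h^{k}$, and the remaining velocity terms are of order $h^{k+1/2}$ or higher, so $\Lambda\le Ch^{k}(\|u\|_{k+1}+\|p\|_{k+1})$. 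Substituting back through the inf--sup inequality gives $|||(\bm{\xi}_u,\bm{\xi}_p)|||\le C_s^{-1}C\Lambda\le Ch^{k}(\|u\|_{k+1}+\|p\|_{k+1})$, which completes the proof.
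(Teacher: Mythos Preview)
Your proposal is correct and follows the same overall strategy as the paper: inf--sup stability (Lemma~\ref{WW-2}) combined with Galerkin orthogonality reduces the problem to a term-by-term continuity bound on $B_h((\bm{\eta}_u,\bm{\eta}_p);(\bm{v}_h,\bm{q}_h))$, and both arguments exploit $\eta_p|_{\mathcal{F}}=\eta_{\hat p}$ to kill $c_h(\bm{\eta}_p,\bm{q}_h)$ and handle the facet part of $b_h(\bm{q}_h,\eta_u)$ via the $\nu^{1/2}$-weighted pressure jump.

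Where you differ is in the tactical handling of several individual terms. You systematically exploit the $L^2$-orthogonality of $\eta_u$: the reaction term $(\sigma\eta_u,v_h)$ and the volume part of $b_h(\bm{q}_h,\eta_u)$ (after integrating by parts) vanish outright, and the convective volume integral is sharpened to $\mathcal{O}(h^{k+1})$ by the mean-subtraction trick $b\mapsto b-\bar b$. The paper, by contrast, does not invoke these orthogonalities: it bounds the reaction term directly, and for the convective volume integral and the volume part of $b_h(\bm{q}_h,\eta_u)$ it uses the inverse inequality $\|\nabla v_h\|_K\le Ch_K^{-1}\|v_h\|_K$ (resp.\ $\|\nabla q_h\|_K\le Ch_K^{-1}\|q_h\|_K$), absorbing the resulting $\|v_h\|$ (resp.\ $\|q_h\|$) into the $\sigma$- (resp.\ $\gamma$-) weighted part of the norm; likewise it treats $b_h(\bm{\eta}_p,v_h)$ without integration by parts, again via inverse and discrete trace inequalities. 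Your route is cleaner and yields slightly sharper intermediate powers of $h$ on the velocity side, while the paper's route is more direct but leans more heavily on inverse inequalities; both arrive at the same $\mathcal{O}(h^{k})$ final bound with constants independent of $\nu$.
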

\begin{proof}
Using the Galerkin orthogonality and Lemma \ref{WW-2}, we have
\begin{equation*}
\begin{aligned}
|||({\bm{\xi}}_{u},{\bm{\xi}}_{p})|||&\le \frac{1}{C_{s}}\sup_{({\bm{v}}_{h},{\bm{q}}_{h})\in X_{h}^{*}}\frac{B_{h}(({\bm{\xi}}_{u},{\bm{\xi}}_{p});({\bm{v}}_{h},{\bm{q}}_{h}))}{|||({\bm{v}}_{h},{\bm{q}}_{h})|||}\\
&=\frac{1}{C_{s}}\sup_{({\bm{v}}_{h},{\bm{q}}_{h})\in X_{h}^{*}}\frac{B_{h}(({\bm{e}}_{u}-{\bm{\eta}}_{u}, {\bm{e}}_{p}-{\bm{\eta}}_{p}); ({\bm{v}}_{h},{\bm{q}}_{h}))}{|||({\bm{v}}_{h},{\bm{q}}_{h})|||}\\
&=-\frac{1}{C_{s}}\sup_{({\bm{v}}_{h},{\bm{q}}_{h})\in X_{h}^{*}}\frac{B_{h}(({\bm{\eta}}_{u}, {\bm{\eta}}_{p}); ({\bm{v}}_{h},{\bm{q}}_{h}))}{|||({\bm{v}}_{h},{\bm{q}}_{h})|||}.
\end{aligned}
\end{equation*}
The specific equations are described below:
\begin{equation*}
\begin{aligned}
&B_{h}(({\bm{\eta}}_{u}, {\bm{\eta}}_{p});({\bm{v}}_{h},{\bm{q}}_{h}))\\
&=\nu\sum_{K\in\mathcal{T}_{h}}\int_{K}\nabla\eta_{u}:\nabla v_{h}\mr{d}x-\nu\sum_{K\in\mathcal{T}_{h}}\int_{\partial K}(\eta_{u}-\eta_{\hat{u}})\cdot\frac{\partial v_{h}}{\partial n}\mr{d}s\\
&-\nu\sum_{K\in\mathcal{T}_{h}}\int_{\partial K}\frac{\partial \eta_{u}}{\partial n}\cdot(v_{h}-\bar{v}_{h})\mr{d}s+\nu\sum_{K\in\mathcal{T}_{h}}\int_{\partial K}\frac{\eta}{h_{K}}(\eta_{u}-\eta_{\hat{u}})\cdot(v_{h}-\bar{v}_{h})\mr{d}s\\
&-\sum_{K\in\mathcal{T}_{h}}\int_{K}\eta_{u} b\cdot\nabla v_{h}\mr{d}x+\sum_{K\in\mathcal{T}_{h}}\int_{K}\sigma\eta_{u} v_{h}\mr{d}x+\sum_{K\in\mathcal{T}_{h}}\int_{\partial K}\frac{1}{2}( b\cdot n)(\eta_{u}+\eta_{\hat{u}})\cdot(v_{h}-\bar{v}_{h})\mr{d}s\\
&+\sum_{K\in\mathcal{T}_{h}}\int_{\partial K}\frac{1}{2}| b\cdot n|(\eta_{u}-\eta_{\hat{u}})\cdot(v_{h}-\bar{v}_{h})\mr{d}s-\sum_{K\in\mathcal{T}_{h}}\int_{K}\eta_{p}\nabla\cdot v_{h}\mr{d}x\\
&+\sum_{K\in\mathcal{T}_{h}}\int_{\partial K}\eta_{\hat{p}}v_{h}\cdot n\mr{d}s+\sum_{K\in\mathcal{T}_{h}}\int_{K}\nabla\cdot\eta_{u}q_{h}\mr{d}x-\sum_{K\in\mathcal{T}_{h}}\int_{\partial K}\eta_{u}\cdot n\bar{q}_{h}\mr{d}s\\
&+\sum_{K\in\mathcal{T}_{h}}\int_{\partial K}\frac{\alpha}{\nu}h_{K}(\eta_{p}-\eta_{\hat{p}})\cdot(q_{h}-\bar{q}_{h})\mr{d}s.
\end{aligned}
\end{equation*}

We now analyse the right-hand side terms of the equation in turn,
\begin{equation*}
\begin{aligned}
\nu\sum_{K\in\mathcal{T}_{h}}\int_{K}\nabla\eta_{u}:\nabla v_{h}\mr{d}x&\le C\nu||\nabla\eta_{u}|| ||\nabla v_{h}||&\le C\nu^{\frac{1}{2}}\left(\sum_{K\in\mathcal{T}_{h}} h^{2k}||u||_{k+1,K}^{2}\right)^{\frac{1}{2}}|||({\bm{v}}_{h}, {\bm{q}}_{h})|||.
\end{aligned}
\end{equation*}

By using the interpolation theory \eqref{P-111} and the definition of the norm \eqref{GG-1}, 
\begin{equation*}
\begin{aligned}
&\nu\sum_{K\in\mathcal{T}_{h}}\int_{\partial K}(\eta_{u}-\eta_{\hat{u}})\cdot\frac{\partial v_{h}}{\partial n}\mr{d}s\\
&\le C\nu\left(\sum_{K\in\mathcal{T}_{h}}||\eta_{u}-\eta_{\hat{u}}||_{\partial K}^{2}\right)^{\frac{1}{2}}\left(\sum_{K\in\mathcal{T}_{h}}||\nabla v_{h}\cdot n||^{2}_{\partial K}\right)^{\frac{1}{2}}\\
&\le C\nu h^{-\frac{1}{2}}\left(\sum_{K\in\mathcal{T}_{h}}(||\eta_{u}||_{\partial K}^{2}+||\eta_{\hat{u}}||_{\partial K}^{2})\right)^{\frac{1}{2}}\left(\sum_{K\in\mathcal{T}_{h}}||\nabla v_{h}||^{2}_{K}\right)^{\frac{1}{2}}\\
&\le C\nu^{\frac{1}{2}}\left(\sum_{K\in\mathcal{T}_{h}} h^{2k}||u||_{k+1,K}^{2}\right)^{\frac{1}{2}}|||({\bm{v}}_{h}, {\bm{q}}_{h})|||.
\end{aligned}
\end{equation*}

From the approximation property \citep[Lemma 1.59]{Di1Ern2:2012--M}, one obtains
\begin{equation*}
\begin{aligned}
&\nu\sum_{K\in\mathcal{T}_{h}}\int_{\partial K}\frac{\partial \eta_{u}}{\partial n}\cdot(v_{h}-\bar{v}_{h})\mr{d}s\\
&\le C\nu\left(\frac{h}{\eta}\right)^{\frac{1}{2}}\left(\sum_{K\in\mathcal{T}_{h}}||\nabla\eta_{u}\cdot n||_{\partial K}^{2}\right)^{\frac{1}{2}}\left(\sum_{K\in\mathcal{T}_{h}}\int_{\partial K}\frac{\eta}{h_{K}}(v_{h}-\bar{v}_{h})^{2}\mr{d}s\right)^{\frac{1}{2}}\\
&\le C\nu^{\frac{1}{2}}\left(\sum_{K\in\mathcal{T}_{h}} h^{2k}||u||_{k+1,K}^{2}\right)^{\frac{1}{2}}|||({\bm{v}}_{h}, {\bm{q}}_{h})|||.
\end{aligned}
\end{equation*}

By using the H${\ddot{o}}$lder inequality, \eqref{P-111} and \eqref{eq:interpolation-theory-3}, there is
\begin{equation*}
\begin{aligned}
&\nu\sum_{K\in\mathcal{T}_{h}}\int_{\partial K}\frac{\eta}{h_{K}}(\eta_{u}-\eta_{\hat{u}})\cdot(v_{h}-\bar{v}_{h})\mr{d}s\\
&\le C\nu^{\frac{1}{2}}\left(\sum_{K\in\mathcal{T}_{h}}\frac{\eta}{h_{K}}||\eta_{u}-\eta_{\hat{u}}||^{2}_{\partial K}\right)^{\frac{1}{2}}\left(\sum_{K\in\mathcal{T}_{h}}\frac{\nu\eta}{h_{K}}||v_{h}-\bar{v}_{h}||_{\partial K}^{2}\right)^{\frac{1}{2}}\\
&\le C\nu^{\frac{1}{2}}\left(\sum_{K\in\mathcal{T}_{h}}\frac{\eta}{h_{K}}(||\eta_{u}||^{2}_{\partial K}+||\eta_{\hat{u}}||^{2}_{\partial K})\right)^{\frac{1}{2}}|||({\bm{v}}_{h}, {\bm{q}}_{h})|||\\
&\le C\nu^{\frac{1}{2}}\left(\sum_{K\in\mathcal{T}_{h}} h^{2k}||u||_{k+1,K}^{2}\right)^{\frac{1}{2}}|||({\bm{v}}_{h}, {\bm{q}}_{h})|||.
\end{aligned}
\end{equation*}

According to the H$\ddot{o}$lder inequality and the inverse inequality (see \citep[Lemma 1.28]{Di1Dro2:2020--motified}), 
\begin{equation*}
\begin{aligned}
&\sum_{K\in\mathcal{T}_{h}}\int_{K}\eta_{u} b\cdot\nabla v_{h}\mr{d}x\\&\le C\left(\sum_{K\in\mathcal{T}_{h}}||\eta_{u}||_{K}^{2}\right)^{\frac{1}{2}} \left(\sum_{K\in\mathcal{T}_{h}}||\nabla v_{h}||_{K}^{2}\right)^{\frac{1}{2}}\\
&\le Ch^{-1}\left(\sum_{K\in\mathcal{T}_{h}} h^{2k+2}||u||_{k+1,K}^{2}\right)^{\frac{1}{2}}\left(\sum_{K\in\mathcal{T}_{h}}||v_{h}||_{K}^{2}\right)^{\frac{1}{2}}\\
&\le C\left(\sum_{K\in\mathcal{T}_{h}} h^{2k}||u||_{k+1,K}^{2}\right)^{\frac{1}{2}}|||({\bm{v}}_{h}, {\bm{q}}_{h})|||.
\end{aligned}
\end{equation*}

From the H$\ddot{o}$lder inequality and \eqref{P-111}, there is
\begin{equation*}
\begin{aligned}
&\sum_{K\in\mathcal{T}_{h}}\int_{K}\sigma\eta_{u}v_{h}\mr{d}x\\&\le C\left(\sum_{K\in\mathcal{T}_{h}}||\eta_{u}||_{K}^{2}\right)^{\frac{1}{2}} \left(\sum_{K\in\mathcal{T}_{h}}||v_{h}||_{K}^{2}\right)^{\frac{1}{2}}\\
&\le C\left(\sum_{K\in\mathcal{T}_{h}} h^{2k+2}||u||_{k+1,K}^{2}\right)^{\frac{1}{2}}|||({\bm{v}}_{h},{\bm{q}}_{h})|||.
\end{aligned}
\end{equation*}

Then according to the H$\ddot{o}$lder inequality, \eqref{P-111} and \eqref{eq:interpolation-theory-3}, 
\begin{equation*}
\begin{aligned}
&\sum_{K\in\mathcal{T}_{h}}\int_{\partial K}\frac{1}{2}( b\cdot n)(\eta_{u}+\eta_{\hat{u}})\cdot(v_{h}-\bar{v}_{h})\mr{d}s\\
&\le C\left(\sum_{K\in\mathcal{T}_{h}}||\eta_{u}+\eta_{\hat{u}}||_{\partial K}^{2}\right)^{\frac{1}{2}}\left(\sum_{K\in\mathcal{T}_{h}}\int_{\partial K}| b\cdot n|(v_{h}-\bar{v}_{h})^{2}\mr{d}s\right)^{\frac{1}{2}}\\
&\le C\left(\sum_{K\in\mathcal{T}_{h}}(||\eta_{u}||_{\partial K}^{2}+||\eta_{\hat{u}}||_{\partial K}^{2})\right)^{\frac{1}{2}}|||({\bm{v}}_{h}, {\bm{q}}_{h})|||\\
&\le C\left(\sum_{K\in\mathcal{T}_{h}} h^{2k+1}||u||_{k+1,K}^{2}\right)^{\frac{1}{2}}|||({\bm{v}}_{h},{\bm{q}}_{h})|||,
\end{aligned}
\end{equation*}
and using a similar method,
\begin{equation*}
\begin{aligned}
&\sum_{K\in\mathcal{T}_{h}}\int_{\partial K}\frac{1}{2}| b\cdot n|(\eta_{u}-\eta_{\hat{u}})\cdot(v_{h}-\bar{v}_{h})\mr{d}s\\
&\le C\left(\sum_{K\in\mathcal{T}_{h}}||\eta_{u}-\eta_{\hat{u}}||_{\partial K}^{2}\right)^{\frac{1}{2}} \left(\sum_{K\in\mathcal{T}_{h}}\int_{\partial K}| b\cdot n|(v_{h}-\bar{v}_{h})^{2}\mr{d}s\right)^{\frac{1}{2}}\\
&\le C\left(\sum_{K\in\mathcal{T}_{h}}(||\eta_{u}||_{\partial K}^{2}+||\eta_{\hat{u}}||_{\partial K}^{2})\right)^{\frac{1}{2}}|||({\bm{v}}_{h}, {\bm{q}}_{h})|||\\
&\le C\left(\sum_{K\in\mathcal{T}_{h}} h^{2k+1}||u||_{k+1,K}^{2}\right)^{\frac{1}{2}}|||({\bm{v}}_{h}, {\bm{q}}_{h})|||.
\end{aligned}
\end{equation*}

By means of the H$\ddot{o}$lder inequality, the inverse inequality \citep[Lemma 1.28]{Di1Dro2:2020--motified} and \eqref{BB-1}, 
\begin{equation*}
\begin{aligned}
&\sum_{K\in\mathcal{T}_{h}}\int_{K}\eta_{p}\nabla\cdot v_{h}\mr{d}x\\&\le C\left(\sum_{K\in\mathcal{T}_{h}}||\eta_{p}||_{K}^{2}\right)^{\frac{1}{2}} \left(\sum_{K\in\mathcal{T}_{h}}||\nabla\cdot v_{h}||_{K}^{2}\right)^{\frac{1}{2}}\\
&\le C\left(\sum_{K\in\mathcal{T}_{h}}||\eta_{p}||_{K}^{2}\right)^{\frac{1}{2}}\left(\sum_{K\in\mathcal{T}_{h}}h_{K}^{-2}||v_{h}||_{K}^{2}\right)^{\frac{1}{2}}\\
&\le C\left(\sum_{K\in\mathcal{T}_{h}} h^{2k}||p||_{k+1,K}^{2}\right)^{\frac{1}{2}}|||({\bm{v}}_{h}, {\bm{q}}_{h})|||.
\end{aligned}
\end{equation*}

According to the discrete trace inequality \eqref{ZZ-2}, 
\begin{equation*}
\begin{aligned}
&\sum_{K\in\mathcal{T}_{h}}\int_{\partial K}\eta_{\hat{p}}v_{h}\cdot n\mr{d}s\\&\le C\left(\sum_{K\in\mathcal{T}_{h}}||\eta_{\hat{p}}||_{\partial K}^{2}\right)^{\frac{1}{2}}\left(\sum_{K\in\mathcal{T}_{h}}||v_{h}\cdot n||_{\partial K}^{2}\right)^{\frac{1}{2}}\\
&\le C\left(\sum_{K\in\mathcal{T}_{h}}||\eta_{\hat{p}}||_{\partial K}^{2}\right)^{\frac{1}{2}}\left(\sum_{K\in\mathcal{T}_{h}}h_{K}^{-1}||v_{h}||_{K}^{2}\right)^{\frac{1}{2}}\\
&\le C\left(\sum_{K\in\mathcal{T}_{h}} h^{2k}||p||_{k+1,K}^{2}\right)^{\frac{1}{2}}|||({\bm{v}}_{h}, {\bm{q}}_{h})|||.
\end{aligned}
\end{equation*}

From integration by parts, we can obtain 
\begin{equation*}
\begin{aligned}
&\sum_{K\in\mathcal{T}_{h}}\int_{K}\nabla\cdot\eta_{u}q_{h}\mr{d}x-\sum_{K\in\mathcal{T}_{h}}\int_{\partial K}\eta_{u}\cdot n\bar{q}_{h}\mr{d}s\\
&=-\sum_{K\in\mathcal{T}_{h}}\int_{K}\eta_{u}\cdot\nabla q_{h}\mr{d}x+\sum_{K\in\mathcal{T}_{h}}\int_{\partial K}\eta_{u}\cdot n(q_{h}-\bar{q}_{h})\mr{d}s.
\end{aligned}
\end{equation*}
Here by using the H$\ddot{o}$lder inequality and \eqref{P-111}, one has
\begin{equation*}
\begin{aligned}
&\sum_{K\in\mathcal{T}_{h}}\int_{K}\eta_{u}\cdot\nabla q_{h}\mr{d}x\\&\le C\left(\sum_{K\in\mathcal{T}_{h}}||\eta_{u}||_{K}^{2}\right)^{\frac{1}{2}}\left(\sum_{K\in\mathcal{T}_{h}}||\nabla q_{h}||_{K}^{2}\right)^{\frac{1}{2}}\\
&\le C\left(\sum_{K\in\mathcal{T}_{h}}||\eta_{u}||_{K}^{2}\right)^{\frac{1}{2}}\left(\sum_{K\in\mathcal{T}_{h}}h_{K}^{-2}||q_{h}||_{K}^{2}\right)^{\frac{1}{2}}\\
&\le C\left(\sum_{K\in\mathcal{T}_{h}} h^{2k}||u||_{k+1,K}^{2}\right)^{\frac{1}{2}}|||({\bm{v}}_{h}, {\bm{q}}_{h})|||
\end{aligned}
\end{equation*}
and from \eqref{P-111} and \eqref{GG-1},
\begin{equation*}
\begin{aligned}
&\sum_{K\in\mathcal{T}_{h}}\int_{\partial K}\eta_{u}\cdot n(q_{h}-\bar{q}_{h})\mr{d}s\\
&\le C\left(\frac{\nu}{\alpha}\right)^{\frac{1}{2}}h^{-\frac{1}{2}}\left(\sum_{K\in\mathcal{T}_{h}}||\eta_{u}||^{2}_{\partial K}\right)^{\frac{1}{2}}\left(\sum_{K\in\mathcal{T}_{h}}\int_{\partial K}\frac{\alpha}{\nu}h_{K}(q_{h}-\bar{q}_{h})^{2}\mr{d}s\right)^{\frac{1}{2}}\\
&\le C\nu^{\frac{1}{2}}\left(\sum_{K\in\mathcal{T}_{h}} h^{2k}||u||_{k+1,K}^{2}\right)^{\frac{1}{2}}|||({\bm{v}}_{h}, {\bm{q}}_{h})|||.
\end{aligned}
\end{equation*}

Applying the continuous Lagrange interpolation, we have $(\eta_{p})|_{\mathcal{F}} = \eta_{\hat{p}}$ and
\begin{equation*}
\begin{aligned}
\sum_{K\in\mathcal{T}_{h}}\int_{\partial K}\frac{\alpha}{\nu}h_{K}(\eta_{p}-\eta_{\hat{p}})\cdot(q_{h}-\bar{q}_{h})\mr{d}s=0.
\end{aligned}
\end{equation*}
Therefore, it is straightforward to derive that 
\begin{equation*}
\begin{aligned}
|B_{h}(({\bm{\eta}}_{u}, {\bm{\eta}}_{p});({\bm{v}}_{h},{\bm{q}}_{h}))|\le Ch^{k}(||u||_{k+1}+||p||_{k+1})|||({\bm{v}}_{h}, {\bm{q}}_{h})|||
\end{aligned}
\end{equation*}
and
\begin{equation*}
|||({\bm{\xi}}_{u}, {\bm{\xi}}_{p})|||\le Ch^{k}(||u||_{k+1}+||p||_{k+1}).
\end{equation*}
\end{proof}
\begin{theorem}\label{PPP-1}
Let $(u, p)\in [H^{k+1}(\Omega)]^{d}\times H^{k+1}(\Omega)$ with $k \ge 1$. Then, there exists a constant $C > 0$, independent of the mesh size $h$ and the viscosity $\nu$, such that 
\begin{equation*}
\begin{aligned}
|||({\bm{e}}_{u}, {\bm{e}}_{p})|||\le Ch^{k}(||u||_{k+1}+||p||_{k+1}). 
\end{aligned}
\end{equation*}
The constant $C$ may depend on the reaction coefficient $\sigma$, convection coefficient $b$, penalty parameter $\eta$, pressure penalty parameters $\alpha$ and $\gamma$.
\end{theorem}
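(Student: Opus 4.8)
The plan is to read off the total error bound directly from the two preceding theorems via the triangle inequality, since the substantive analytic work has already been done: the inf-sup/coercivity estimate of Lemma~\ref{WW-2}, the interpolation estimate of Theorem~\ref{DDD-4}, and the term-by-term control of the consistency residual $B_{h}(({\bm{\eta}}_{u},{\bm{\eta}}_{p});\cdot)$ in Theorem~\ref{DDD-2}. First I would recall the splitting ${\bm{e}}_{u}={\bm{\eta}}_{u}+{\bm{\xi}}_{u}$ and ${\bm{e}}_{p}={\bm{\eta}}_{p}+{\bm{\xi}}_{p}$ from the start of Section~4, where ${\bm{\eta}}$ is the interpolation/projection error and ${\bm{\xi}}$ the discrete remainder. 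Because the triple-bar norm in \eqref{FF-3}--\eqref{GG-1} is assembled as the square root of a sum of squared seminorms, it is itself a seminorm on $V^{*}(h)\times Q^{*}(h)$ and hence satisfies the triangle inequality, giving
\begin{equation*}
|||({\bm{e}}_{u},{\bm{e}}_{p})|||\le |||({\bm{\eta}}_{u},{\bm{\eta}}_{p})|||+|||({\bm{\xi}}_{u},{\bm{\xi}}_{p})|||.
\end{equation*}

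Next I would substitute the bound from Theorem~\ref{DDD-4} into the first summand and the bound from Theorem~\ref{DDD-2} into the second. Theorem~\ref{DDD-2} already supplies precisely $Ch^{k}(\|u\|_{k+1}+\|p\|_{k+1})$, so the only point requiring care is collapsing the prefactors $\nu^{1/2}h^{k}$, $\|b\|_{\infty}^{1/2}h^{k+1/2}$ and $\sigma^{1/2}h^{k+1}$ appearing in the interpolation estimate down to a clean $h^{k}$. This is exactly where the standing hypotheses $0<\nu<1$ and $0<h<1$ are used: they yield $\nu^{1/2}\le 1$, $h^{k+1/2}\le h^{k}$ and $h^{k+1}\le h^{k}$, so that each term of Theorem~\ref{DDD-4} is dominated by $Ch^{k}(\|u\|_{k+1}+\|p\|_{k+1})$, with $C$ now permitted to absorb $\|b\|_{\infty}$, $\sigma$, $\eta$, $\alpha$ and $\gamma$ as the statement allows. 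Adding the two contributions and renaming the constant yields the asserted estimate.

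I do not expect a genuine obstacle at this step; the theorem is essentially a corollary of the two preceding ones, and the crucial $\nu$-independence of the constants is already built into them. The single subtlety worth flagging explicitly is the absorption of the convective prefactor: the interpolation bound loses only half a power of $h$ in the upwind seminorm (the $h^{k+1/2}$ term), which is harmless because $h<1$, so no negative power of $\nu$ is created at this final stage and the semi-robust character of the estimate is preserved. Thus the triangle-inequality argument above is sufficient to conclude the optimal order-$k$ bound with a $\nu^{-1}$-independent constant.
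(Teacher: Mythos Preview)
Your proposal is correct and matches the paper's own proof, which simply states that the result follows from the triangle inequality together with Theorem~\ref{DDD-4} and Theorem~\ref{DDD-2}. Your added remarks about absorbing the prefactors $\nu^{1/2}$, $h^{1/2}$ and $h$ using $0<\nu<1$ and $0<h<1$ make explicit exactly the bookkeeping the paper leaves implicit.
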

\begin{proof}
By the triangle inequality, Theorem \ref{DDD-4}, and Theorem \ref{DDD-2}, the proof is complete.
\end{proof}

\section{Numerical experiment}
We present a numerical experiment to validate the theoretical convergence results established in Theorem \ref{PPP-1}.

\subsection{Experimental Setup}
The computational domain is taken as $\Omega = (0,1)^2$. The exact solutions are selected as:
\begin{equation*}
\begin{aligned}
&u_{1}(x,y)= \sin(\pi x)^{2} \sin(2\pi y), \\
&u_{2}(x,y)= -\sin(2\pi x)\sin(\pi y)^{2}, \\
&p(x,y)= \sin(2\pi x)\sin(2\pi y).
\end{aligned}
\end{equation*}
The convective field is set as $b = (1,0)^{T}$ with reaction coefficient $\sigma = 1$. The source term $f = (f_{1}, f_{2})$ is constructed by substituting the exact solutions into the Oseen equation, ensuring that $(u, p)$ satisfies the equations exactly. Homogeneous Dirichlet boundary conditions are imposed, i.e., $u = 0$ on $\partial\Omega$.

To examine the algorithm's adaptability to different viscosity parameters, we consider two cases: $\nu = 1$ and $\nu = 0.1$. Computations are performed on uniformly refined triangular meshes with mesh sizes $h = 1/2,1/4, \cdots, 1/128, 1/256$. Finite element spaces with polynomial degrees $k = 1$ and $k = 2$ are employed.

\subsection{Stabilization Parameter Selection}\label{OOO-1}
The numerical performance of hybridized discontinuous Galerkin methods is influenced by the stabilization parameters for the diffusion and pressure terms. Following established interior penalty practices, the diffusion stabilization parameter can be chosen as follows: for HDG, $\eta = 6k^{2}$ in 2D and $\eta = 10k^{2}$ in 3D; for EDG and E-HDG, $\eta = 4k^{2}$ in 2D and $\eta = 6k^{2}$ in 3D \cite{Rhe1Well2:2020--motified, Riv:2008--D}. These values have proven to be reliable across a variety of problems.

In our parameter study, we use E-HDG with $k = 1$ as a representative case. All calculations are carried out by using MATLAB. Based on a comprehensive parameter study examining the velocity, pressure, and combined errors (see Figures \ref{HH-1}-\ref{HH-3}), we select the pressure penalty parameter as $\alpha = 10^{-2}$. For other choices of the stabilization parameter $\alpha$, the situation does not improve significantly. 

\begin{figure}[H]
\centering
\hspace*{-1.2cm}\includegraphics[width=130mm, height=80mm]{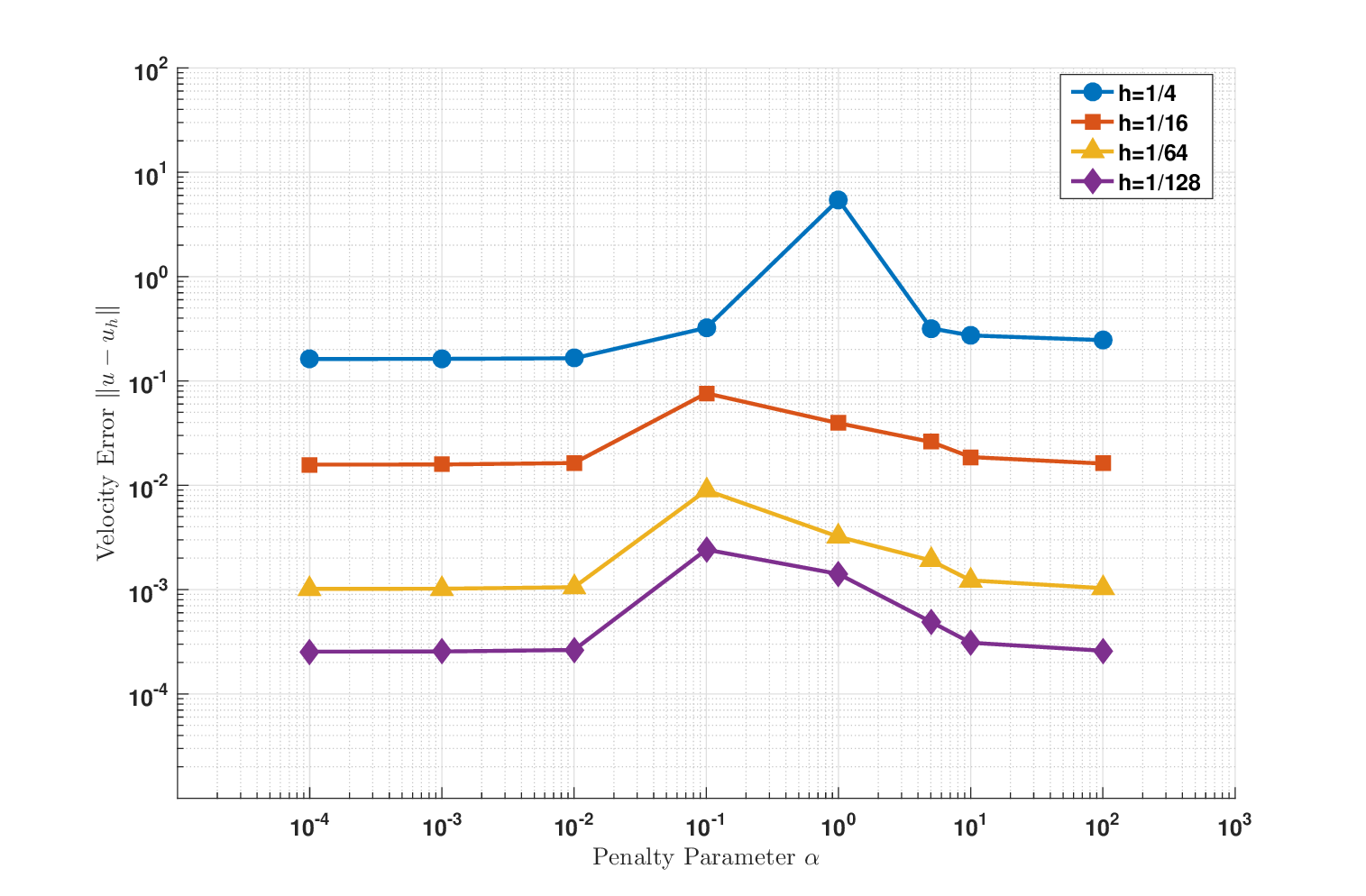}
\vspace{-0.5cm}
\begin{center}
\caption{$\Vert u-u_{h}\Vert$ and pressure penalty parameter $\alpha$ for E-HDG ($\nu=1$, $k=1$, $\eta=4$)}\label{HH-1}
\end{center}
\end{figure}
\vspace{-0.9cm}

\begin{figure}[H]
\centering
\hspace*{-1.2cm}\includegraphics[width=130mm, height=80mm]{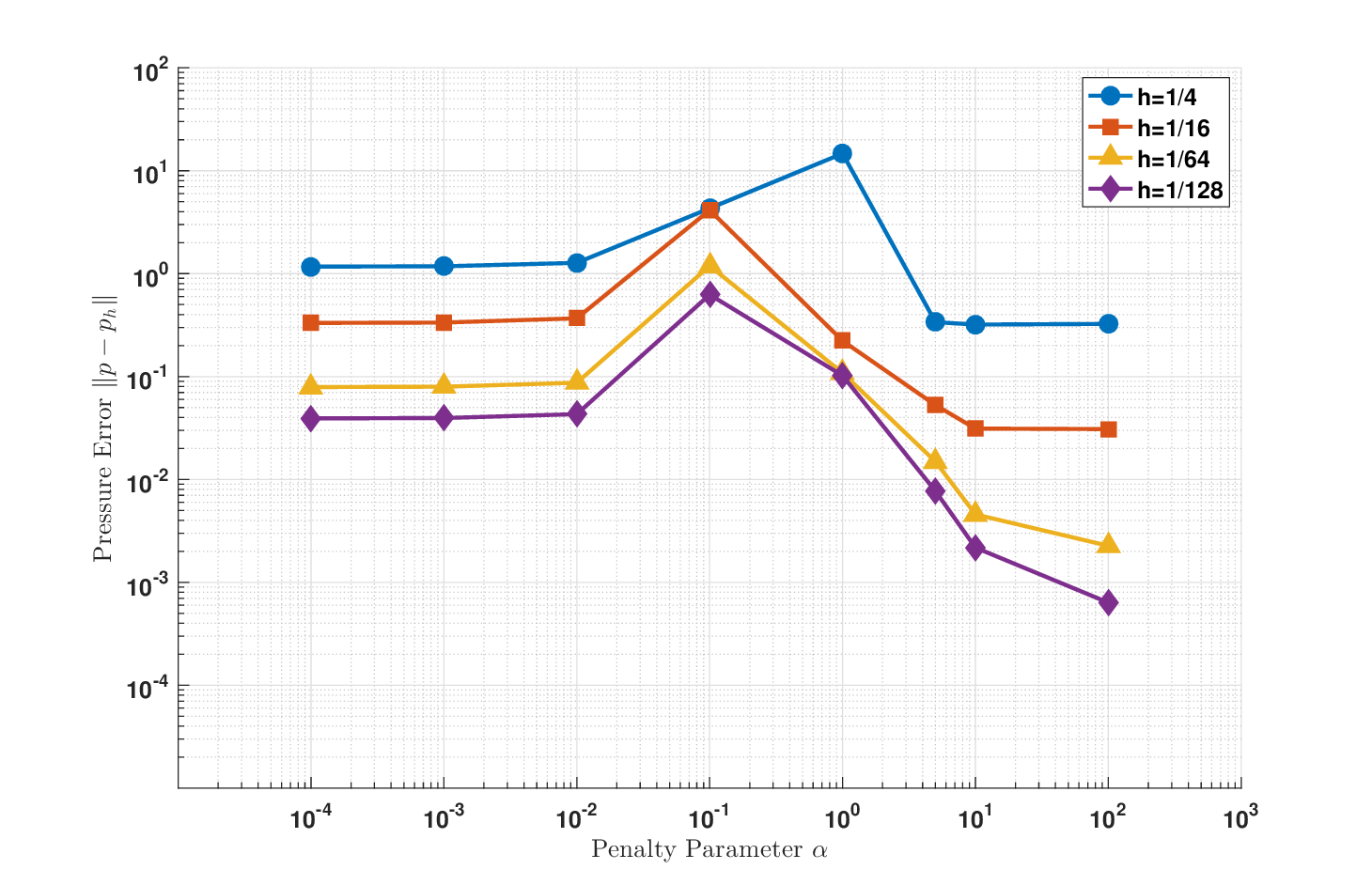}
\vspace{-0.5cm}
\begin{center}
\caption{$\Vert p-p_{h}\Vert$ and pressure penalty parameter $\alpha$ for E-HDG ($\nu=1$, $k=1$, $\eta=4$)}\label{HH-2}
\end{center}
\end{figure}
\vspace{-0.6cm}

\begin{figure}[H]
\centering
\hspace*{-1.2cm}\includegraphics[width=130mm, height=80mm]{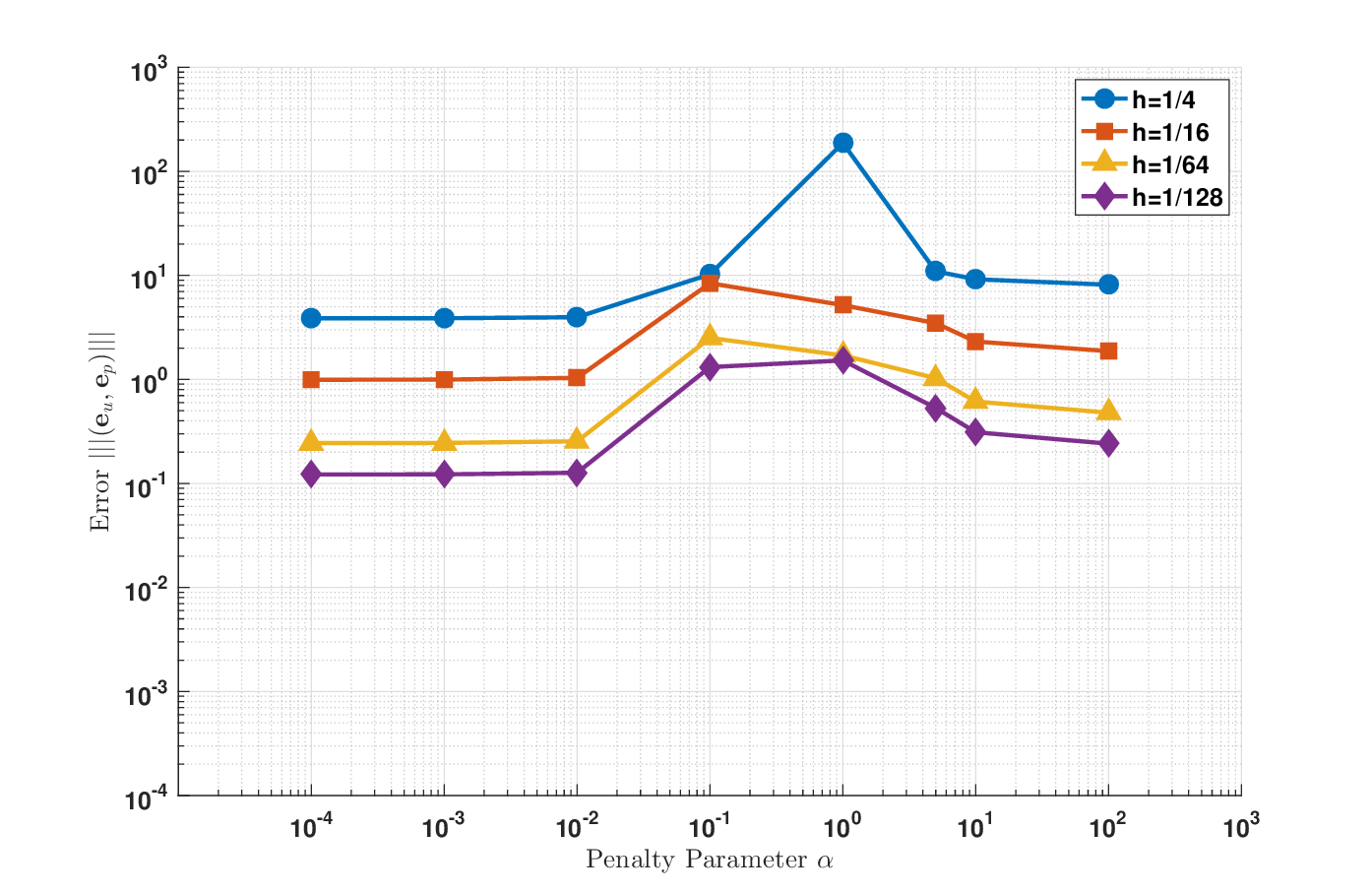}
\vspace{-0.5cm}
\begin{center}
\caption{$|||(\mathbf{e}_{u}, \mathbf{e}_{p})|||$ and pressure penalty parameter $\alpha$ for E-HDG  ($\nu=1$, $k=1$, $\eta=4$)}\label{HH-3}
\end{center}
\end{figure}
\vspace{-0.6cm}
For cases where optimal parameters are not readily identified a priori, we extend the methodology to simultaneously vary both $\eta$ and $\alpha$ over an appropriate parametric range to determine the optimal combination.

\subsection{Observed convergence rates}
For each computation, we record velocity errors in the $L^{2}$ norm, pressure errors in the $L^{2}$ norm, and combined errors in the energy norm \eqref{GG-1}. The convergence rate $r$ is computed by
\begin{equation*}
r = \frac{\log(e(h)/e(h/2))}{\log(2)},
\end{equation*}
where $e(h)$ denotes the error at mesh size $h$.

\begin{table}[H]
\caption{Errors and convergence rates in case of $\nu=1$ and $k=1$ for E-HDG method }
\footnotesize
\resizebox{137mm}{20mm}{
\begin{tabular*}{\textwidth}{@{\extracolsep{\fill}} c cccccc}
\cline{1-7}{}
            \multirow{2}{*}{$h$}   &\multicolumn{2}{c}{$||u-u_{h}||$} &\multicolumn{2}{c}{$||p-p_{h}||$}  &\multicolumn{2}{c}{$|||({\bm{e}}_{u}, {\bm{e}}_{p})|||$}  \\
 
\cline{2-7}&$e_{u}$&$r_{u}$&$e_{p}$&$r_{p}$&$e_{DG}$&$r_{DG}$\\
\cline{1-7}

             $1/2$   &3.388e-1  &---      &1.022e+0  &---  &5.685e+0  &--- \\
             $1/4$   &1.648e-1  & 1.04  &1.273e+0 &-0.32  &3.965e+0  & 0.52 \\
             $1/8$    &5.858e-2  &1.49   &7.615e-1  &0.74  &2.102e+0  &0.92 \\
             $1/16$   &1.632e-2  &1.84   &3.678e-1  &1.05  &1.035e+0  &1.02  \\
             $1/32$   &4.192e-3  &1.96  &1.774e-1  &1.05  &5.112e-1  &1.02 \\
             $1/64$   &1.054e-3  &1.99  &8.701e-2  &1.03  &2.542e-1  &1.01 \\
             $1/128$   &2.638e-4  &2.00  &4.309e-2  &1.01  &1.268e-1  &1.00 \\
             $1/256$   &6.595e-5  &2.00   &2.144e-2  &1.01  &6.335e-2 &1.00\\
\cline{1-7}
\end{tabular*}}
\label{table:1-1}
\end{table}

\vspace{-0.4cm}
\begin{table}[H]
\caption{Errors and convergence rates in case of $\nu=1$ and $k=1$ for EDG method }
\footnotesize
\resizebox{137mm}{20mm}{
\begin{tabular*}{\textwidth}{@{\extracolsep{\fill}} c cccccc}
\cline{1-7}{}
            \multirow{2}{*}{ $h$ }   &\multicolumn{2}{c}{$||u-u_{h}||$} &\multicolumn{2}{c}{$||p-p_{h}||$}  &\multicolumn{2}{c}{$|||({\bm{e}}_{u}, {\bm{e}}_{p})|||$}  \\
 
\cline{2-7}&$e_{u}$&$r_{u}$&$e_{p}$&$r_{p}$&$e_{DG}$&$r_{DG}$\\
\cline{1-7}

             $1/2$   &3.981e-1  &---      &5.790e-1  &---  & 5.734e+0  &--- \\
             $1/4$   &1.610e-1  &1.31  &4.170e-1 &0.47  &4.561e+0  &0.33 \\
             $1/8$    &4.783e-2  &1.75   &1.933e-1  &1.11  &2.422e+0  &0.91 \\
             $1/16$   &1.254e-2  &1.93   &9.256e-2  &1.06  &1.234e+0  &0.97  \\
             $1/32$   &3.174e-3  &1.98  &4.568e-2  &1.02 &6.213e-1  &0.99 \\
             $1/64$   &7.962e-4  &1.99  &2.278e-2  &1.00  &3.115e-1 &1.00 \\
             $1/128$   &1.993e-4  &2.00  &1.139e-2  &1.00  &1.560e-1  &1.00 \\
             $1/256$   &4.984e-5 &2.00   &5.693e-3 &1.00  &7.803e-2   &1.00\\
\cline{1-7}
\end{tabular*}}
\label{table:1-2}
\end{table}

\vspace{-0.5cm}
\begin{table}[H]
\caption{Errors and convergence rates in case of $\nu=1$ and $k=1$ for HDG method }
\footnotesize
\resizebox{137mm}{20mm}{
\begin{tabular*}{\textwidth}{@{\extracolsep{\fill}} c cccccc}
\cline{1-7}{}
            \multirow{2}{*}{ $h$ }   &\multicolumn{2}{c}{$||u-u_{h}||$} &\multicolumn{2}{c}{$||p-p_{h}||$}  &\multicolumn{2}{c}{$|||({\bm{e}}_{u}, {\bm{e}}_{p})|||$}  \\
 
\cline{2-7}&$e_{u}$&$r_{u}$&$e_{p}$&$r_{p}$&$e_{DG}$&$r_{DG}$\\
\cline{1-7}

             $1/2$   &2.909e-1  &---      &1.729e+0  &---  &5.665e+0  &--- \\
             $1/4$   &1.275e-1  &1.19  &1.906e+0 &-0.14  &3.925e+0  &0.53 \\
             $1/8$    &4.298e-2  &1.57   &1.304e+0  &0.55  &2.342e+0  &0.74 \\
             $1/16$   &1.212e-2  &1.83   &7.380e-1  &0.82  &1.249e+0  &0.91  \\
             $1/32$   &3.141e-3  &1.95  &3.834e-1  &0.94  &6.368e-1  &0.97 \\
             $1/64$   &7.922e-4  &1.99  &1.937e-1  &0.98 &3.201e-1  &0.99 \\
             $1/128$   &1.984e-4  &2.00  &9.713e-2  &1.00 &1.603e-1  &1.00 \\
             $1/256$   &4.962e-5  &2.00   &4.861e-2  &1.00 &8.016e-2 &1.00\\
\cline{1-7}
\end{tabular*}}
\label{table:1-3}
\end{table}
\vspace{-0.5cm}
\begin{table}[H]
\caption{Errors and convergence rates in case of $\nu=1$ and $k=2$ for E-HDG method }
\footnotesize
\resizebox{140mm}{17mm}{
\begin{tabular*}{\textwidth}{@{\extracolsep{\fill}} c cccccc}
\cline{1-7}{}
            \multirow{2}{*}{$h$}   &\multicolumn{2}{c}{$||u-u_{h}||$} &\multicolumn{2}{c}{$||p-p_{h}||$}  &\multicolumn{2}{c}{$|||({\bm{e}}_{u}, {\bm{e}}_{p})|||$}  \\
 
\cline{2-7}&$e_{u}$&$r_{u}$&$e_{p}$&$r_{p}$&$e_{DG}$&$r_{DG}$\\
\cline{1-7}

             $1/2$   &2.338e-1  &---      &6.460e-1  &---  &4.382e+0  &--- \\
             $1/4$   &3.147e-2  &2.89  &4.149e-1 &0.64  &1.422e+0  &1.62 \\
             $1/8$    &3.736e-3  &3.07   &1.090e-1  &1.93  &3.847e-1  &1.89 \\
             $1/16$   &4.185e-4  &3.16   &2.442e-2  &2.16  &9.354e-2  &2.04  \\
             $1/32$   &4.860e-5  &3.11  &5.371e-3  &2.18  &2.263e-2  &2.05 \\
             $1/64$   &5.910e-6  &3.04  &1.263e-3 &2.09  &5.586e-3  &2.02 \\
\cline{1-7}
\end{tabular*}}
\label{table:1-4}
\end{table}

\begin{table}[H]
\caption{Errors and convergence rates in case of $\nu=1$ and $k=2$ for EDG method }
\footnotesize
\resizebox{140mm}{17mm}{
\begin{tabular*}{\textwidth}{@{\extracolsep{\fill}} c cccccc}
\cline{1-7}{}
            \multirow{2}{*}{ $h$ }   &\multicolumn{2}{c}{$||u-u_{h}||$} &\multicolumn{2}{c}{$||p-p_{h}||$}  &\multicolumn{2}{c}{$|||({\bm{e}}_{u}, {\bm{e}}_{p})|||$}  \\
 
\cline{2-7}&$e_{u}$&$r_{u}$&$e_{p}$&$r_{p}$&$e_{DG}$&$r_{DG}$\\
\cline{1-7}

             $1/2$   &1.221e+0  &---      &6.406e+0  &---  & 1.467e+1  &--- \\
             $1/4$   &9.148e-2  &3.74  &1.104e+0 &2.54  &2.398e+0  &2.61 \\
             $1/8$    &1.044e-2  &3.13   &2.348e-1  &2.23  &5.388e-1  &2.15 \\
             $1/16$   &1.277e-3  &3.03   &5.936e-2  &1.98  &1.328e-1  &2.02  \\
             $1/32$   &1.559e-4  &3.03  &1.513e-2  &1.97 &3.290e-2  &2.01 \\
             $1/64$   &1.929e-5  &3.01  &3.816e-3  &1.99  &8.193e-3 &2.01 \\
\cline{1-7}
\end{tabular*}}
\label{table:1-5}
\end{table}

\begin{table}[H]
\caption{Errors and convergence rates in case of $\nu=1$ and $k=2$ for HDG method }
\footnotesize
\resizebox{137mm}{18mm}{
\begin{tabular*}{\textwidth}{@{\extracolsep{\fill}} c cccccc}
\cline{1-7}{}
            \multirow{2}{*}{ $h$ }   &\multicolumn{2}{c}{$||u-u_{h}||$} &\multicolumn{2}{c}{$||p-p_{h}||$}  &\multicolumn{2}{c}{$|||({\bm{e}}_{u}, {\bm{e}}_{p})|||$}  \\
 
\cline{2-7}&$e_{u}$&$r_{u}$&$e_{p}$&$r_{p}$&$e_{DG}$&$r_{DG}$\\
\cline{1-7}

             $1/2$   &2.404e-1  &---      &3.565e+0  &---  &4.660e+0  &--- \\
             $1/4$   &3.347e-2  &2.84  &1.197e+0 &1.57  &1.604e+0  &1.54 \\
             $1/8$    &3.631e-3  &3.20   &3.471e-1  &1.79 &4.570e-1  &1.81 \\
             $1/16$   &3.934e-4  &3.21   &9.083e-2  &1.93  &1.188e-1  &1.94  \\
             $1/32$   &4.627e-5  &3.09  &2.285e-2  &1.99  &2.990e-2  &1.99 \\
             $1/64$   &5.672e-6  &3.03  &5.695e-3  &2.00 &7.463e-3  &2.00 \\
             $1/128$   &7.049e-7 &3.01 &1.419e-3 &2.00 &1.862e-3   &2.00 \\
\cline{1-7}
\end{tabular*}}
\label{table:1-6}
\end{table}

As shown in Tables \ref{table:1-1}-\ref{table:1-6}, all methods achieve the optimal convergence rate of $\mathcal{O}(h^{k+1})$ for the velocity in the $L^{2}$-norm. Meanwhile, the pressure in the $L^2$-norm and the velocity-pressure pair under the norm \eqref{GG-1} both converge at the expected rate of $\mathcal{O}(h^{k})$. In particular, from Tables \ref{table:1-7}-\ref{table:1-12}, the methods preserve ideal convergence behavior even for $\nu=0.1$. 

\begin{table}[H]
\caption{Errors and convergence rates in case of $\nu=0.1$ and $k=1$ for E-HDG method }
\footnotesize
\resizebox{137mm}{20mm}{
\begin{tabular*}{\textwidth}{@{\extracolsep{\fill}} c cccccc}
\cline{1-7}{}
            \multirow{2}{*}{ $h$ }   &\multicolumn{2}{c}{$||u-u_{h}||$} &\multicolumn{2}{c}{$||p-p_{h}||$}  &\multicolumn{2}{c}{$|||({\bm{e}}_{u}, {\bm{e}}_{p})|||$}  \\
 
\cline{2-7}&$e_{u}$&$r_{u}$&$e_{p}$&$r_{p}$&$e_{DG}$&$r_{DG}$\\
\cline{1-7}

             $1/2$   &4.996e-1  &---      &5.195e-1  &---  &2.387e+0  &--- \\
             $1/4$   &2.021e-1  &1.31  &2.096e-1 &1.31  &1.573e+0  &0.60 \\
             $1/8$    &5.067e-2  &2.00   &3.895e-2  &2.43  &8.454e-1  &0.90 \\
             $1/16$   &1.373e-2  &1.88   &8.723e-3  &2.16  &5.088e-1  &0.73  \\
             $1/32$   &4.138e-3  &1.73  &2.500e-3  &1.80  &3.311e-1  &0.62 \\
             $1/64$   &1.285e-3  &1.69  &1.080e-3  &1.21 &.2.142e-1  &0.63 \\
             $1/128$   &3.846e-4  &1.74  &6.033e-4  &0.84  &1.306e-1  &0.71 \\
             $1/256$   &1.085e-4  &1.83   &3.394e-4  &0.83  &7.425e-2 &0.81\\
\cline{1-7}
\end{tabular*}}
\label{table:1-7}
\end{table}

\begin{table}[H]
\caption{Errors and convergence rates in case of $\nu=0.1$ and $k=1$ for EDG method }
\footnotesize
\resizebox{137mm}{20mm}{
\begin{tabular*}{\textwidth}{@{\extracolsep{\fill}} c cccccc}
\cline{1-7}{}
            \multirow{2}{*}{ $h$ }   &\multicolumn{2}{c}{$||u-u_{h}||$} &\multicolumn{2}{c}{$||p-p_{h}||$}  &\multicolumn{2}{c}{$|||({\bm{e}}_{u}, {\bm{e}}_{p})|||$}  \\
 
\cline{2-7}&$e_{u}$&$r_{u}$&$e_{p}$&$r_{p}$&$e_{DG}$&$r_{DG}$\\
\cline{1-7}

             $1/2$   &5.005e-1  &---      &5.411e-1  &---  &2.304e+0  &--- \\
             $1/4$   &1.885e-1  &1.41  &2.098e-1 &1.37  &1.387e+0  &0.73 \\
             $1/8$    &4.508e-2  &2.06   &3.996e-2  &2.39  &7.060e-1  &0.97 \\
             $1/16$   &1.103e-2  &2.03   &1.142e-2  &1.81  &3.644e-1  &0.95 \\
             $1/32$   &2.765e-3  &2.00  &4.703e-3  &1.28  &1.880e-1  &0.95 \\
             $1/64$   &6.956e-4  &1.99  &2.259e-3  &1.06 &9.602e-2  &0.97 \\
             $1/128$   &1.748e-4  &1.99  &1.127e-3  &1.00  &4.860e-2  &0.98 \\
             $1/256$   &4.383e-5  &2.00   &5.656e-4  &0.99  &2.446e-2 &0.99\\
\cline{1-7}
\end{tabular*}}
\label{table:1-8}
\end{table}

\begin{table}[H]
\caption{Errors and convergence rates in case of $\nu=0.1$ and $k=1$ for HDG method }
\footnotesize
\resizebox{137mm}{20mm}{
\begin{tabular*}{\textwidth}{@{\extracolsep{\fill}} c cccccc}
\cline{1-7}{}
            \multirow{2}{*}{ $h$ }   &\multicolumn{2}{c}{$||u-u_{h}||$} &\multicolumn{2}{c}{$||p-p_{h}||$}  &\multicolumn{2}{c}{$|||({\bm{e}}_{u}, {\bm{e}}_{p})|||$}  \\
 
\cline{2-7}&$e_{u}$&$r_{u}$&$e_{p}$&$r_{p}$&$e_{DG}$&$r_{DG}$\\
\cline{1-7}

             $1/2$   &3.027e-1  &---      &3.447e-1  &---  &1.832e+0  &--- \\
             $1/4$   &1.252e-1  &1.27  &2.385e-1 &0.53  &1.148e+0  &0.67 \\
             $1/8$    &3.910e-2  &1.68   &1.431e-1  &0.74  &6.424e-1  &0.84 \\
             $1/16$   &1.050e-2  &1.90   &7.656e-2  &0.90  &3.297e-1  &0.96 \\
             $1/32$   &2.660e-3  &1.98  &3.891e-2  &0.98  &1.657e-1  &0.99 \\
             $1/64$   &6.649e-4  &2.00 &1.950e-2  &1.00 &8.293e-2  &1.00 \\
             $1/128$  &1.659e-4  &2.00  &9.742e-3  &1.00  &4.147e-2  &1.00 \\
             $1/256$  &4.141e-5 &2.00  &4.867e-3   &1.00  &2.074e-2  &1.00\\
\cline{1-7}
\end{tabular*}}
\label{table:1-9}
\end{table}


\begin{table}[H]
\caption{Errors and convergence rates in case of $\nu=0.1$ and $k=2$ for E-HDG method }
\footnotesize
\resizebox{140mm}{17mm}{
\begin{tabular*}{\textwidth}{@{\extracolsep{\fill}} c cccccc}
\cline{1-7}{}
            \multirow{2}{*}{ $h$ }   &\multicolumn{2}{c}{$||u-u_{h}||$} &\multicolumn{2}{c}{$||p-p_{h}||$}  &\multicolumn{2}{c}{$|||({\bm{e}}_{u}, {\bm{e}}_{p})|||$}  \\
 
\cline{2-7}&$e_{u}$&$r_{u}$&$e_{p}$&$r_{p}$&$e_{DG}$&$r_{DG}$\\
\cline{1-7}

             $1/2$   &1.226e+0  &---      &1.262e+0  &---  &9.660e+0  &--- \\
             $1/4$   &3.834e-1  &1.68  &5.651e-1 &1.16  &5.727e+0  &0.75 \\
             $1/8$    &3.439e-3  &6.80   &1.153e-2  &5.62  &1.148e-1  &5.64 \\
             $1/16$   &3.954e-4  &3.12   &2.516e-3  &2.20  &2.799e-2  &2.04  \\
             $1/32$   &4.728e-5  &3.06  &5.545e-4  &2.18  &6.885e-3  &2.02 \\
             $1/64$   &5.832e-6 &3.02  &1.291e-4  &2.10 &1.714e-3  &2.01\\
\cline{1-7}
\end{tabular*}}
\label{table:1-10}
\end{table}

\begin{table}[H]
\caption{Errors and convergence rates in case of $\nu=0.1$ and $k=2$ for EDG method}
\footnotesize
\resizebox{140mm}{17mm}{
\begin{tabular*}{\textwidth}{@{\extracolsep{\fill}} c cccccc}
\cline{1-7}{}
            \multirow{2}{*}{ $h$ }   &\multicolumn{2}{c}{$||u-u_{h}||$} &\multicolumn{2}{c}{$||p-p_{h}||$}  &\multicolumn{2}{c}{$|||({\bm{e}}_{u}, {\bm{e}}_{p})|||$}  \\
 
\cline{2-7}&$e_{u}$&$r_{u}$&$e_{p}$&$r_{p}$&$e_{DG}$&$r_{DG}$\\
\cline{1-7}

             $1/2$   &2.587e-1  &---      &4.569e-1  &---  &1.097e+0  &--- \\
             $1/4$   &3.297e-2  &2.97  &1.215e-1 &1.91  &3.541e-1  &1.63 \\
             $1/8$    &4.136e-3  &2.99   &4.232e-2  &1.52  &1.072e-1  &1.72 \\
             $1/16$   &4.949e-4  &3.06   &1.337e-2  &1.66  &2.959e-2  &1.86 \\
             $1/32$   &5.956e-5  &3.05  &3.675e-3  &1.86  &7.657e-3  &1.95 \\
             $1/64$   &7.335e-6  &3.02  &9.446e-4  &1.96 &1.932e-3  &1.99\\
\cline{1-7}
\end{tabular*}}
\label{table:1-11}
\end{table}

\begin{table}[H]
\caption{Errors and convergence rates in case of $\nu=0.1$ and $k=2$ for HDG method }
\footnotesize
\resizebox{140mm}{17mm}{
\begin{tabular*}{\textwidth}{@{\extracolsep{\fill}} c cccccc}
\cline{1-7}{}
            \multirow{2}{*}{ $h$ }   &\multicolumn{2}{c}{$||u-u_{h}||$} &\multicolumn{2}{c}{$||p-p_{h}||$}  &\multicolumn{2}{c}{$|||({\bm{e}}_{u}, {\bm{e}}_{p})|||$}  \\
 
\cline{2-7}&$e_{u}$&$r_{u}$&$e_{p}$&$r_{p}$&$e_{DG}$&$r_{DG}$\\
\cline{1-7}

             $1/2$   &2.244e-1  &---      &4.514e-1  &---  &1.122e+0  &--- \\
             $1/4$   &3.040e-2  &2.88  &1.287e-1 &1.81  &3.691e-1  &1.60\\
             $1/8$    &3.403e-3  &3.16  &3.543e-2  &1.86  &1.011e-1  &1.87 \\
             $1/16$   &3.836e-4  &3.15   &9.156e-3  &1.95  &2.596e-2  &1.96 \\
             $1/32$   &4.595e-5  &3.06  &2.293e-3  &2.00 &6.520e-3  &1.99\\
             $1/64$   &5.664e-6  &3.02 &5.705e-4  &2.01 &1.629e-3  &2.00 \\
\cline{1-7}
\end{tabular*}}
\label{table:1-12}
\end{table}
In summary, the numerical results validate the main conlusion and confirm the efficacy of  hybridized discontinuous Galerkin methods for the Oseen equations, even in convection-dominated regimes.

\section{Statements and Declarations}
\subsection{Funding}
This work was supported by the National Natural Science Foundation of China (11771257) and Shandong Provincial Natural Science Foundation, China (ZR2023YQ002).
\subsection{CRediT authorship contribution statement}
{\textbf{Xiaoqi Ma:}} Writing---original draft, Methodology, Investigation, Visualization. {\textbf{Jin Zhang:}} Methodology, Investigation, Funding acquisition.
\subsection{Data availability statement}
The authors confirm that the data supporting the findings of this study are available within the article and its supplementary materials.
\subsection{Conflict of interests}
The authors declare that they have no conflict of interest.


\end{document}